\chardef\bslash=`\\ 
\newtheorem{thm}{Theorem}[section]
\newtheorem{cor}[thm]{Corollary}
\newtheorem{lem}[thm]{Lemma}
\newtheorem{prop}[thm]{Proposition}
\theoremstyle{definition}
\newtheorem{defn}{Definition}[section]
\newtheorem{ex}{Example}[section]
\theoremstyle{remark}
\newtheorem{rem}{Remark}[section]
\newtheorem*{notation}{Notation}
\newcommand{\thmref}[1]{Theorem~\ref{#1}}
\newcommand{\lemref}[1]{Lemma~\ref{#1}}
\newcommand{\propref}[1]{Proposition~\ref{#1}}
\newcommand{\defnref}[1]{Definition~\ref{#1}}
\newcommand{\exref}[1]{Example~\ref{#1}}
\newcommand{\eval}[2][\right]{\relax
  \ifx#1\right\relax \left.\fi#2#1\rvert}
\def\projlim{\mathop{\oalign{lim\cr
\hidewidth$\longleftarrow$\hidewidth\cr}}}
 \def\R{{\bf{R}}}
\def\C{{\bf{C}}}
\def\Z{{\bf{Z}}}
\def\N{{\bf{N}}}
\def\t{{\bf{T}}}
\def\Q{{\bf{Q}}}
\def\tfae{the following conditions are equivalent:}
\title{Random walks on Bratteli diagrams}
\author{Jean Renault}
\address{Universit\'e d'Orl\'eans et CNRS  (UMR 7349 et FR2964), D\'epartement de Math\'ematiques, 
F-45067 Orl\'eans Cedex 2, France}
\email{jean.renault@univ-orleans.fr}
\keywords{Bratteli diagrams, hyperfinite von Neumann algebras.}
\subjclass{Primary 22A22; Secondary 54H20, 43A65, 46L55.}
\begin{document}
\vskip5mm
\begin{abstract}
In the eighties, A. Connes and E. J. Woods made a connection between  hyperfinite von Neumann algebras and Poisson boundaries of time dependent random walks. I will explain this connection and will present two theorems given there:  the construction of a large class of states on a hyperfinite von Neumann algebra (due to A. Connes) and the ergodic decomposition of a Markov measure via harmonic functions (a classical result in probability theory). The crux of the first theorem is a model for conditional expectations on finite dimensional C*-algebras. Our proof of the second theorem hinges on the notion of cotransition probability.
\end{abstract}
\maketitle
\markboth{Jean Renault}
{Random walks on Bratteli diagrams}

\renewcommand{\sectionmark}[1]{}

\section{Introduction.}

The connection between operator algebras and ergodic theory goes back to the early days of these subjects. More recently, A.~Connes and E.~J.~Woods have uncovered in \cite{cw:random} a new connection between hyperfinite von Neumann algebras and Markov chains. They have identified the flow of weights of an ITPFI factor $M$ as the Poisson boundary of a group-invariant time-dependent Markov random walk on the real line $\R$.  They also generalize this identification  to arbitrary hyperfinite factors by introducing matrix-valued random walks. In the same article, they give a partial answer to a related question in ergodic theory which goes back to G.~Mackey. Given a locally compact group $G$, characterize the measured $G$-spaces $X$ which can be constructed as the Poisson boundary of a random walk on $G$. A necessary condition is the amenability in the sense of Zimmer of the $G$-space $X$ (Zimmer had already shown the amenability of the Poisson boundary of a time-independent random walk). Another necessary condition is its approximate transitivity, a notion introduced earlier by Connes and Woods to characterize the flow of weights of an ITPFI factor. They state that these conditions are also sufficient and show that this is the case for transitive actions and when the group is $\R$ or $\Z$. More generally, one can ask what measured $G$-spaces  can be constructed as the Poisson boundary of a matrix-valued random walk on $G$. The complete answer is given by  Adams, Elliott and Giordano in \cite{aeg:continuous} (see also \cite{eg:discrete}): these are exactly the amenable $G$-spaces.

My purpose here is not to explain these results, but to put into light some points which are only implicit in \cite{cw:random}. I shall put on the front of the stage random walks on Bratteli diagrams, which is in fact another name for time-dependent Markov chains (with discrete time), and cotransition probabilities. While cotransition probabilities do not appear explicitly in the case of UHF diagrams, they become crucial when studying arbitrary Bratteli diagrams. Their importance in the study of  boundaries of  random walks on Bratteli diagrams is also emphasized in the recent work \cite{ver:equipped,ver:graded graphs} of A.~Vershik.  My scope will be limited to the presentation of two known theorems which play a key r\^ole in \cite{cw:random}. The first one, given here as \thmref{Markov states}, is Theorem 1 of \cite{ac:krieger}, which  makes a direct connection between a large class of states on a hyperfinite von Neumann algebra and random walks on a Bratteli diagram. The second one (\thmref{harmonic})  is a classical result in probability theory \cite[Proposition V-2-2]{nev:proba}  which gives the ergodic decomposition of a Markov measure via bounded harmonic functions. On the way, I will give the reduction of faithful conditional expectations on a finite dimensional C*-algebra (\thmref{fd expectation}), a definitely well-known result which can be easily extracted from \cite{ac:krieger}, because it provides the building block of a random walk on a Bratteli diagram. The definitions of a matrix-valued random walk on a group and of its Poisson boundary will be given in the last section only, where they should appear more natural after the presentation of \thmref{Markov states} and \thmref{harmonic}.

\section{Conditional expectations on finite dimensional C*-algebras}

Although the material of this section is not new, I have not found a reference for \thmref{fd expectation} below which gives a complete invariant for a faithful conditional expectation on a finite dimensional C*-algebra. The description of an inclusion of finite dimensional C*-algebras  in terms of matrix units, which is a part of the theorem, and its description by a diagram, have been given by O. Bratteli in his fundamental paper \cite{bra:AF} (see its proposition 1.7 and its section 1.8). The graphical description of a conditional expectation on a finite dimensional C*-algebra appears in section 3 (iii) of \cite{cw:random} but without much detail. Our proof will emphasize Cartan (or diagonal) subalgebras, which appear only implicitly in the work of Bratteli. The corresponding groupoid models are also known as path models or tail equivalence relations.

We first give the ingredients and the recipe to construct a faithful conditional expectation $Q$ of a finite dimensional C*-algebra $\underline M$ onto a sub-C*-algebra $M$. Then, we show that every faithful conditional expectation is obtained by this recipe. Let us recall that we can associate to an equivalence relation $R$ on a finite set $X$ a finite dimensional C*-algebra $M=C^*(R)$: its elements are the functions (or matrices) $f:R\to\C$ (here $R\subset X\times X$ is the graph of the equivalence relation), the product is the matrix multiplication and the involution is the usual complex conjugate of a matrix. It has a canonical matrix unit $(e(x,y))_{(x,y)\in R}$ indexed by $R$. Consider now finite sets $X,V,E,{\underline V}$ equipped with surjections $r:X\to V$, $s:E\to V$, $r:E\to{\underline V}$. The triple $(V,E,\underline V)$, which we call here a {\it graph}, will be ubiquitous in this survey. Define
$$\underline X=\{(x,a)\in X\times E: r(x)=s(a)\}$$
and the equivalence relations:
$$R=\{(x,y)\in X\times X: r(x)=r(y)\}$$
$$\underline R=\{(xa, yb)\in\underline X\times\underline X: r(a)=r(b).$$
We can construct the C*-algebras $C^*(R)$ and $C^*({\underline R})$. Moreover, the map $j: C^*(R)\to C^*({\underline R})$ given by
$$j(f)(xa,yb)=\left\{\begin{array}{ccc}
f(x,y) &{\rm if}& a=b\\
0 &{\rm if}& a\not=b
\end{array}\right.$$
identifies $C^*(R)$ to a subalgebra of $C^*(\underline R)$. We shall make this identification and view the elements of $C^*(R)$ as functions on $\underline R$. Then $(V,E,\underline V)$ is the graph of the inclusion. We leave as an exercise to the reader the proof of the following lemma:

\begin{lem} Let $R'$ be the following equivalence relation on $E$:
$$R'=\{(a,b)\in E\times E: s(a)=s(b), r(a)=r(b)\}$$
Then the map $k: C^*(R')\to C^*({\underline R})$ given by
$$k(g)(xa,yb)=\left\{\begin{array}{ccc}
g(a,b) &{\rm if}&x=y\\
0 &{\rm if}&x\not=y
\end{array}\right.$$
identifies $C^*(R')$ to the commutant of $C^*(R)$ in $C^*(\underline R)$.
\end{lem}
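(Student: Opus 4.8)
The plan is to make everything explicit by decomposing each of the three $C^*$-algebras into its matrix blocks indexed by equivalence classes, and then to reduce the commutant computation to the elementary fact that the commutant of $A\otimes 1$ in $A\otimes B$ is $1\otimes B$.

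First I would set up notation. Writing $X_v=r^{-1}(v)\subseteq X$ for $v\in V$, the classes of $R$ are the $X_v$, so $C^*(R)\cong\bigoplus_{v\in V}M_{X_v}(\C)$. Writing $E_{v,\underline v}=\{a\in E: s(a)=v,\ r(a)=\underline v\}$, the classes of $R'$ are the nonempty $E_{v,\underline v}$, so $C^*(R')\cong\bigoplus_{v,\underline v}M_{E_{v,\underline v}}(\C)$. Finally, the classes of $\underline R$ are indexed by $\underline v\in\underline V$: setting $\underline X_{\underline v}=\{(x,a)\in\underline X: r(a)=\underline v\}$ gives $C^*(\underline R)\cong\bigoplus_{\underline v}M_{\underline X_{\underline v}}(\C)$. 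The structural observation that drives the proof is that, since $(x,a)\in\underline X_{\underline v}$ forces $s(a)=r(x)$, each block refines as $\underline X_{\underline v}=\bigsqcup_v X_v\times E_{v,\underline v}$; hence $M_{\underline X_{\underline v}}\cong M_{\bigsqcup_v X_v\times E_{v,\underline v}}$, whose diagonal corner at $v$ is $M_{X_v}\otimes M_{E_{v,\underline v}}$.

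Next I would read off the images of $j$ and $k$ in this picture. The defining formula for $j$ shows that $j(f)$ connects $(x,a)$ to $(y,b)$ only when $a=b$ (so in particular $s(a)=s(b)$), with value $f(x,y)$; thus $j(f)$ is diagonal with respect to the decomposition by $v$ and acts on the corner $(v,\underline v)$ as $f_v\otimes 1_{E_{v,\underline v}}$, giving $j(C^*(R))_{\underline v}=\bigoplus_v M_{X_v}\otimes 1$. Symmetrically $k(g)$ connects $(x,a)$ to $(y,b)$ only when $x=y$ (forcing $s(a)=s(b)$ and, together with membership in $\underline R$, $(a,b)\in R'$), with value $g(a,b)$; so $k(g)$ is again $v$-diagonal and acts on the corner $(v,\underline v)$ as $1_{X_v}\otimes g_{v,\underline v}$, giving $k(C^*(R'))_{\underline v}=\bigoplus_v 1\otimes M_{E_{v,\underline v}}$. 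This also makes transparent that $k$ is an injective $*$-homomorphism, injectivity following from surjectivity of $r\colon X\to V$, which produces for each $(a,b)\in R'$ an index $x$ with $(xa,xb)\in\underline R$.

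It then remains to compute the commutant. Because $C^*(\underline R)$ is already the direct sum $\bigoplus_{\underline v}M_{\underline X_{\underline v}}$, every element is block-diagonal in $\underline v$, so the commutant of $j(C^*(R))$ is the direct sum over $\underline v$ of the commutants of $j(C^*(R))_{\underline v}$ inside $M_{\underline X_{\underline v}}$; although a single $f$ feeds all $\underline v$-blocks at once, the condition $[T,j(f)]=0$ for all $f$ splits into an independent condition in each block. Fixing $\underline v$ and writing $T=(T_{v,v'})$ in block form, the relation $(f_v\otimes 1)T_{v,v'}=T_{v,v'}(f_{v'}\otimes 1)$, for all independent choices of $f_v,f_{v'}$, forces $T_{v,v}\in 1\otimes M_{E_{v,\underline v}}$ on the diagonal, and — taking $f_{v'}=0$, $f_v=1$ — forces $T_{v,v'}=0$ off the diagonal. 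Hence the commutant in block $\underline v$ is $\bigoplus_v 1\otimes M_{E_{v,\underline v}}=k(C^*(R'))_{\underline v}$, and assembling over $\underline v$ yields $j(C^*(R))'=k(C^*(R'))$. The one point needing genuine care, rather than bookkeeping, is precisely the vanishing of the off-diagonal blocks $T_{v,v'}$ for $v\neq v'$: a priori the commutant could couple different source-vertices, and ruling this out is exactly where one uses the independence of the components $f_v$ of an element of $C^*(R)$. Everything else is the tensor-product commutation theorem applied corner by corner.
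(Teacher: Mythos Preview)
Your argument is correct. The paper actually leaves this lemma as an exercise, so there is no proof to compare against directly. Your block decomposition $\underline X_{\underline v}=\bigsqcup_v X_v\times E_{v,\underline v}$ is the natural way to organize the computation, and the reduction to the tensor-product commutant $(M_{X_v}\otimes 1)'=1\otimes M_{E_{v,\underline v}}$ on each diagonal corner is clean. You also correctly isolate the one nontrivial point, namely the vanishing of the off-diagonal blocks $T_{v,v'}$, and your use of the independence of the components $f_v$ (take $f_v=1$, $f_{v'}=0$) is exactly what is needed there.

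For comparison, the paper later proves a closely related statement (\lemref{commutant}) in the more general setting of an abstract Cartan pair inclusion, and there the style is different: instead of a global block/tensor decomposition, it works one matrix entry at a time, using identities such as
\[
\underline e(xa)\,\underline f\,\underline e(yb)=\underline e(xa)\,e(x)\,\underline f\,\underline e(yb)=\underline e(xa)\,\underline f\,e(x)\,\underline e(yb)=0\qquad(x\neq y)
\]
to kill the off-diagonal entries directly. That approach is more elementwise and avoids naming the tensor factors, while yours makes the structure $M_{X_v}\otimes M_{E_{v,\underline v}}$ explicit and then quotes the standard commutant fact. Either route is fine; yours has the advantage of making the isomorphism $C^*(R')\cong\bigoplus_{v,\underline v}M_{E_{v,\underline v}}$ and the image $k(C^*(R'))$ visible at once.
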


\begin{defn} A {\it transition probability} on the graph $E$ is a function $p:E\to\R_+^*$ such that for all $v\in V$, $\sum_{s(c)=v}p(c)=1$.
\end{defn}

\begin{prop}\label{model expectation} Let $X,V,E,\underline V$ be as above and let $p$ be a transition probability on $E$. The map $Q:C^*({\underline R})\to  C^*(R)$ defined by
$$Q({\underline f})(x,y)=\sum_c p(c) {\underline f}(xc, yc),$$
where the sum is over all edges $c\in E$ originating from the common range of $x$ and $y$, is 
a faithful conditional expectation onto $C^*(R)$.
\end{prop}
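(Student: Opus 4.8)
The plan is to verify directly the four properties that make $Q$ a faithful conditional expectation: linearity together with positivity, the normalization $Q|_{C^*(R)}=\mathrm{id}$ (equivalently $Q\circ j=\mathrm{id}$), the $C^*(R)$-bimodule property $Q\bigl(j(f)\,\underline g\, j(h)\bigr)=f\,Q(\underline g)\,h$ for $f,h\in C^*(R)$, and faithfulness. Everything becomes transparent once the action of $Q$ on the canonical matrix units is recorded. Substituting $e(xa,yb)$ into the defining formula gives
$$Q\bigl(e(xa,yb)\bigr)=\begin{cases} p(a)\,e(x,y), & a=b,\\ 0, & a\neq b,\end{cases}$$
so $Q$ simply kills the part off-diagonal in the edge variable and rescales the diagonal part by the transition probability.

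The two algebraic identities are then short computations. For $Q\circ j=\mathrm{id}$ I would substitute $j(f)$ into the definition: the edges $c$ with $s(c)=r(x)=r(y)$ each contribute $p(c)f(x,y)$, and the normalization $\sum_{s(c)=v}p(c)=1$ collapses the sum to $f(x,y)$. This is the one place where the defining condition of a transition probability is used, and it also yields $Q(1)=1$. For the bimodule property I would expand the two products $j(f)\underline g$ and $\underline g\,j(h)$ in $C^*(\underline R)$; because $j(f)$ is supported on $a=b$, each internal contraction pins the edge index, leaving $\bigl(j(f)\,\underline g\, j(h)\bigr)(xc,yc)=\sum_{z,w} f(x,z)\,\underline g(zc,wc)\,h(w,y)$. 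Applying $Q$ and pulling the weight $p(c)$ inside produces the factor $\sum_c p(c)\,\underline g(zc,wc)=Q(\underline g)(z,w)$, reproducing $f\,Q(\underline g)\,h$. This step is routine bookkeeping.

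The conceptual heart is positivity. The classes of $\underline R$ are indexed by $\underline V$, giving $C^*(\underline R)=\bigoplus_{\underline v}M_{\underline v}$, so that $\underline f\ge 0$ means each such block is positive semidefinite; likewise $C^*(R)=\bigoplus_v M_v$. Fix $v\in V$ and an edge $c$ with $s(c)=v$. The assignment $x\mapsto xc$ injects $\{x:r(x)=v\}$ into the index set of the block of $\underline R$ at $r(c)$, so $[\underline f(xc,yc)]_{r(x)=r(y)=v}$ is a principal submatrix of a positive semidefinite block and is therefore positive semidefinite. The $v$-block of $Q(\underline f)$ is the sum of these principal submatrices weighted by the strictly positive numbers $p(c)$, hence positive semidefinite; thus $Q(\underline f)\ge 0$. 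Combined with the previous paragraph, this already shows $Q$ is a conditional expectation.

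Faithfulness follows from the same block picture. If $\underline f\ge 0$ and $Q(\underline f)=0$, then each diagonal entry $Q(\underline f)(x,x)=\sum_c p(c)\,\underline f(xc,xc)$ vanishes; since the summands are nonnegative and $p(c)>0$, every $\underline f(xc,xc)$ is zero, and as each element of $\underline X$ occurs as some pair $xc$, the entire diagonal of $\underline f$ vanishes. A positive semidefinite matrix with vanishing diagonal is zero, so $\underline f=0$. The only genuine obstacle is organizing the indexing of the blocks cleanly enough that the principal-submatrix observation underlying positivity is manifest; once that is in place the rest is mechanical.
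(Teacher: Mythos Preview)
Your proof is correct and is precisely the ``straightforward verification'' that the paper leaves to the reader; the paper's own proof consists of that single phrase. Your organization via the action on matrix units, the principal-submatrix argument for positivity, and the diagonal argument for faithfulness are all sound and constitute exactly the kind of direct check the author had in mind.
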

\begin{proof} This is a straighforward verification.
\end{proof}

We are going to prove a converse to \propref{model expectation}: namely all faithful conditional expectations $Q:\underline M\to M$, where $M$ is a sub-C*-algebra of a finite dimensional C*-algebra $\underline M$, are of that form. We first recall the notion of Cartan subalgebra which will be our main tool. It is an algebraic characterization of the canonical abelian subalgebra $C(X)$ of the C*-algebra $C^*(R)$ of an equivalence relation $R$ on $X$ as above.

\begin{defn} An abelian subalgebra $A$ of a von Neumann algebra $M$ is called a {\it Cartan subalgebra} if it is maximal self-adjoint, regular and there exists a faithful normal conditional expectation $P:M\to A$. We then say that $(M,A)$ is a {\it Cartan pair}.
\end{defn}
Regularity means that the normalizer of $A$ in $M$, which is defined here as
$$N_M(A)=\{ v\,\hbox{partial isometry of}\,M: vAv^*\subset A, v^*Av\subset A\},$$
generates $M$ as a von Neumann algebra. We recall the fact that the conditional expectation $P$ is unique. The main result of \cite{fm:relations} is that every Cartan pair $(M,A)$ (if one assumes that $M$ acts on a separable Hilbert space) is of the form $(W^*(R,\tau), L^\infty(X,\mu))$ where $R$ is a countable Borel equivalence relation on a standard measured space $(X,\mu)$, where $\mu$ is a quasi-invariant measure and $\tau\in Z^2(R,\t)$ is a Borel twist. When $M$ is finite dimensional, the result of \cite{fm:relations} is elementary: we let $X$ be the spectrum of $A$ and $V$ be the spectrum of the centre $Z(M)$ of $M$. The inclusion $Z(M)\subset A$ gives a surjective map $r:X\to V$. We let $R$ be the equivalence relation admitting $r$ as quotient map. Each $x\in X$ corresponds to a minimal projection $e(x)$ in $A$; $e(x)$ and $e(y)$ are equivalent if and only if $(x,y)\in R$. We choose a matrix unit $(e(x,y))_{(x,y)\in R}$ such that for all $x\in X$, $e(x,x)=e(x)$. This matrix unit defines an isomorphism $M\to C^*(R)$ sending $A$ to $C(X)$. Thus, when $M$ is finite dimensional, the twist is trivial. However, it does not admit a canonical trivialization. Note also that in a finite dimensional C*-algebra, the notions of Cartan subalgebra and of maximal abelian self-adjoint subalgebra agree.
We shall need an easy lemma about extension of matrix units.

\begin{lem}\label{partial} Let $(M,A)$ be a finite dimensional Cartan pair and let $(X,R)$ be the corresponding equivalence relation. Then every partial matrix unit $(e(x,y))_{(x,y)\in S}$ in $M$, where $S$ is a subequivalence relation of $R$, can be extended to a full matrix unit $(e(x,y))_{(x,y)\in R}$.
\end{lem}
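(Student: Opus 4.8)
The plan is to build the missing off-diagonal matrix units one $R$-class at a time, which is legitimate because both $R$ and its subequivalence relation $S$ respect the partition of $X$ into $R$-classes, and $C^*(R)$ splits as the direct sum over these classes. Fix an $R$-class $C$. Then $S$ partitions $C$ into $S$-classes $C_1,\dots,C_k$, and on each $C_i$ the prescribed family $(e(x,y))_{x,y\in C_i}$ is already a complete matrix unit, realizing $C_i$ as a matrix subalgebra. Since $S$ is reflexive, the whole diagonal $e(x,x)=e(x)$ is prescribed, so any extension must preserve it; what is genuinely missing is a \emph{coherent} way to connect the blocks, i.e.\ elements $e(x,y)$ with $x\in C_i$, $y\in C_j$ for $i\neq j$.

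The first step is to produce a coherent system of connectors between chosen representatives $x_i\in C_i$. Each $e(x_i)=e(x_i,x_i)$ is a minimal projection of the maximal abelian subalgebra $A$, hence minimal in $M$, and since $x_1,\dots,x_k$ all lie in the single $R$-class $C$ these projections are Murray--von Neumann equivalent in $M$. I may therefore choose, for each $i$, a partial isometry $u_{1i}\in M$ with $u_{1i}^*u_{1i}=e(x_i)$ and $u_{1i}u_{1i}^*=e(x_1)$, taking $u_{11}=e(x_1)$. The key device is then to set $u_{ij}:=u_{1i}^*u_{1j}$. Routing every connector through the single reference vertex $x_1$ forces the relations $u_{ii}=e(x_i)$, $u_{ij}^*=u_{ji}$, and $u_{ij}u_{jl}=u_{il}$ (the last using $u_{1j}u_{1j}^*=e(x_1)$); independently chosen connectors would fail to compose, so this step is the heart of the argument.

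With the connectors in hand I would define, for $x\in C_i$ and $y\in C_j$,
$$e(x,y)=e(x,x_i)\,u_{ij}\,e(x_j,y),$$
the outer factors being taken from the given partial matrix unit. When $x,y$ lie in the same block one has $u_{ii}=e(x_i)$, so the expression collapses to the original $e(x,y)$; thus the extension restricts to the prescribed family on $S$ and in particular fixes the diagonal. The adjoint relation $e(x,y)^*=e(y,x)$ is immediate, and the product relation $e(x,y)e(z,w)=\delta_{yz}\,e(x,w)$ follows by substituting the definition, collapsing the inner product $e(x_j,y)e(z,x_l)$ via the within-block matrix-unit relations, and then using $u_{ij}e(x_j)=u_{ij}$ together with $u_{ij}u_{jl}=u_{il}$. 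Summing over all $R$-classes recovers $\sum_x e(x,x)=\sum_x e(x)=1$, so $(e(x,y))_{(x,y)\in R}$ is a full matrix unit extending the given partial one.

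I expect no serious obstacle beyond bookkeeping: the only real idea is the coherence of the connectors, secured by defining $u_{ij}$ through a common vertex. The verification of the product relation is the most error-prone point, since one must keep track of which factors are original within-block units and which are new connectors, but it is entirely mechanical once the three relations on the $u_{ij}$ are established.
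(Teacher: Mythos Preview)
Your argument is correct, but it is genuinely different from the paper's. The paper proceeds cohomologically: it fixes once and for all an auxiliary full matrix unit $(\underline e(x,y))_{(x,y)\in R}$, observes that on $S$ the given partial unit must satisfy $e(x,y)=c(x,y)\underline e(x,y)$ for some $\t$-valued function $c$ (since $e(x)M e(y)$ is one-dimensional), checks that $c$ is a cocycle on $S$, uses the triviality of $H^1$ for a finite equivalence relation to write $c(x,y)=b(x)\overline{b(y)}$, and then sets $e(x,y):=b(x)\underline e(x,y)\overline{b(y)}$ for all $(x,y)\in R$. Your approach avoids both the auxiliary matrix unit and the cocycle language by building connectors $u_{ij}$ directly from Murray--von Neumann equivalence, routed through a common vertex to force coherence. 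The cohomological proof is shorter and makes transparent that the only possible obstruction is a $\t$-cocycle on $S$, which is always a coboundary---a viewpoint the paper exploits elsewhere when discussing twists. Your construction, by contrast, is more self-contained: it needs neither the existence of a reference full matrix unit nor the vanishing-$H^1$ fact, and it would adapt verbatim to settings (say, extending matrix units in an abstract finite-dimensional C*-algebra along an inclusion of diagonals) where no cocycle description is on hand.
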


\begin{proof} We fix an arbitrary full matrix unit $(\underline e(x,y), (x,y)\in R)$. There exists a function $c:S\to \t$, where $\t$ is the group of complex numbers of module 1, such that $e(x,y)=c(x,y)\underline e(x,y)$ for all $(x,y)\in S$. It is a cocycle. Every cocycle on $S$ is trivial: there exists $b:X\to\t$ such that $c(x,y)=b(x)\overline{b(y)}$ for all $(x,y)\in S$. Then, we define
$e(x,y)=b(x)\underline e(x,y)\overline{b(y)}$ for all $(x,y)\in R$.
\end{proof}

The following lemma is a complement to Lemma III.1.14 of \cite{ren:approach}.

\begin{lem}\label{key lemma} Given  an inclusion $M\subset {\underline M}$ of finite dimensional C*-algebras, a faithful conditional expectation $Q:{\underline M}\to M$ and a Cartan subalgebra $A$ of $M$, there exists a Cartan subalgebra ${\underline A}$ of ${\underline  M}$ such that 
\begin{enumerate} 
\item $A\subset {\underline A}$, 
\item $N_{M}( A)\subset N_{{\underline M}}({\underline A})$, and 
\item $Q\circ {\underline P}=P\circ Q_0$, where $P$ is the conditional expectation from $M$ onto $A$, ${\underline P}$ is the conditional expectation from ${\underline M}$ to ${\underline A}$ and $Q_0$ is the restriction of $Q$ to ${\underline A}$.
\end{enumerate}
\end{lem}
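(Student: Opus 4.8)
The plan is to reduce the construction to the relative commutant $B=A'\cap\underline{M}$ and to exploit that, in a finite-dimensional algebra, the conditional expectation onto a maximal abelian subalgebra is unique and equals the diagonal projection attached to a matrix unit. First I would use the finite-dimensional description recalled above to identify $(M,A)$ with $(C^*(R),C(X))$ and fix a matrix unit $(e(x,y))_{(x,y)\in R}$ with $e(x,x)=e(x)$ the minimal projections of $A$. Every element of $B$ commutes with each $e(x)$, hence is block diagonal, so $B=\bigoplus_{x\in X}B_x$ with $B_x=e(x)\underline{M}e(x)$. Since $A$ is maximal abelian in $M$ we have $A'\cap M=A$, and the $M$-bimodule property of $Q$ forces $Q(B)\subseteq A$; thus $Q$ restricts to a faithful conditional expectation $B\to A$ whose $x$-component is a faithful state $\omega_x$ on $B_x$, with density matrix $\rho_x$ relative to the trace of the corner. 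The same bimodule property shows that the trace-preserving $\ast$-isomorphism $\mathrm{Ad}(e(x,y))\colon B_y\to B_x$ (for $(x,y)\in R$) intertwines $\omega_y$ with $\omega_x$, so that $\mathrm{Ad}(e(x,y))\rho_y=\rho_x$.

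Next I would make an equivariant choice of block MASAs. For one base point $x_0$ in each $R$-class (one point in each fibre of $r\colon X\to V$) I would pick a maximal abelian subalgebra $\underline{A}_{x_0}$ of $B_{x_0}$ containing the self-adjoint element $\rho_{x_0}$, and propagate it by $\underline{A}_x:=\mathrm{Ad}(e(x,x_0))\underline{A}_{x_0}$; the relations $e(x,y)e(y,x_0)=e(x,x_0)$ make this well defined (this is the matrix-unit analogue of the cocycle triviality used in \lemref{partial}), and $\mathrm{Ad}(e(x,x_0))\rho_{x_0}=\rho_x$ guarantees $\rho_x\in\underline{A}_x$. Set $\underline{A}=\bigoplus_x\underline{A}_x$. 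As a direct sum of MASAs of the blocks it is maximal abelian in $B$, and since $A\subseteq\underline{A}$ any element of $\underline{M}$ commuting with $\underline{A}$ lies in $A'\cap\underline{M}=B$ and hence in $\underline{A}$; thus $\underline{A}$ is maximal abelian, and therefore Cartan, since the two notions agree in finite dimension. It contains every $e(x)$, which is (i).

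For (ii) I would use that a normalizer in $N_M(A)$ is supported on the graph of a partial bijection contained in $R$, so it is a combination $\sum c(x,y)e(x,y)$ with $|c(x,y)|=1$; the partial-bijection property kills all cross terms and reduces the verification to the single matrix units, where $e(x,y)\underline{A}e(y,x)=\mathrm{Ad}(e(x,y))\underline{A}_y=\underline{A}_x\subseteq\underline{A}$, and symmetrically for the adjoint. For (iii) I would observe that $P$ and $\underline{P}$ are the unique (diagonal) expectations onto $A$ and $\underline{A}$, and evaluate both composites on $\underline m\in\underline{M}$: writing $b_x=e(x)\underline m e(x)\in B_x$, the bimodule properties give $P\circ Q(\underline m)=\sum_x\omega_x(b_x)e(x)$ and $Q\circ\underline{P}(\underline m)=\sum_x\omega_x(\underline{P}_x(b_x))e(x)$, where $\underline{P}_x\colon B_x\to\underline{A}_x$ is the block diagonal expectation. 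Hence the square commutes exactly when $\omega_x\circ\underline{P}_x=\omega_x$ on each block, and this holds because $\rho_x\in\underline{A}_x$ while $\underline{P}_x$ is $\underline{A}_x$-bimodular and trace preserving.

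The main obstacle is arranging (ii) and (iii) at the same time: condition (iii) demands that each block MASA contain the density $\rho_x$, whereas (ii) demands that the family $(\underline{A}_x)$ be equivariant under the matrix unit. These are compatible only because the densities $\rho_x$ are themselves intertwined by the matrix unit, so the single equivariant recipe ``choose $\underline{A}_{x_0}\ni\rho_{x_0}$, then transport'' meets both requirements at once; I expect the identification of $P\circ Q$ with $Q\circ\underline{P}$ on the blocks to be the step most in need of care.
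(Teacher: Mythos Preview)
Your proof is correct and follows essentially the same route as the paper: choose a base point in each $R$-class (the paper calls this a section $\sigma\colon V\to X$), diagonalize the state that $Q$ induces on the corner at that base point to obtain a MASA there, transport it to the remaining points via $\mathrm{Ad}(e(x,y))$, and then verify (i)--(iii). Your packaging in terms of the relative commutant $B=A'\cap\underline{M}$ and the equivariant family of density matrices $\rho_x$ is a mild reformulation of the paper's direct computation that $P\circ Q$ and $Q\circ\underline{P}$ agree first on the corner $\underline{M}_v=e(\sigma(v))\underline{M}e(\sigma(v))$ and then on each $e(x)\underline{M}e(y)$ by the bimodule property, but the mechanism is identical.
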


\begin{proof} We let $(X,R)$ be the equivalence relation defined by the pair $(M,A)$: $X$ is the spectrum of $A$, $V$ is the spectrum of the centre $Z(M)$ of $M$ and $r:X\to V$ is the quotient map. We choose a matrix unit  $(e(x,y))_{(x,y)}\in R$ of $M$ with $e(x,x)=e(x)$ minimal projection corresponding to $x$. We choose a section $\sigma$ for the map $r: X\to V$. For  each $v\in V$, we set ${\underline M}_v=e(\sigma(v)){\underline M}e(\sigma(v))$. There exists a unique state ${\underline \varphi}_v$ of the algebra ${\underline M}_v$ such that $Q({\underline f})={\underline \varphi}_v({\underline f})e(\sigma(v))$ for  all ${\underline f}\in {\underline M}_v$. It is faithful because $Q$ is faithful. Since self-adjoint matrices are diagonalizable, there exists a Cartan subalgebra ${\underline A}_v$ of ${\underline M}_v$ such that ${\underline \varphi}_v={\underline \varphi}_v\circ {\underline P}_v$, where ${\underline P}_v$ is the conditional expectation onto ${\underline A}_v$. For $x\in X$, we define
$${\underline A}_x=e(x,\sigma(r(x))){\underline A}_{r(x)}e(\sigma(r(x)),x)$$
Then ${\underline A}=\oplus_{x\in X} {\underline A}_x$ is a Cartan subalgebra of ${\underline M}$. It contains $A$ because for all $x\in X$, ${\underline A}_x$ contains $e(x)$ as its unit element. By construction $e(x,y)$ belongs to the normalizer of ${\underline A}$ in ${\underline M}$, hence $N_{M}( A)\subset N_{\underline M}({\underline A})$.
Let $v\in V$ and ${\underline a}\in {\underline M}_v$. Then
$$P\circ Q({\underline a})=P({\underline \varphi}_v({\underline a})e(\sigma(v)))={\underline \varphi}_v({\underline a})e(\sigma(v)).$$
On the other hand, since ${\underline P}_v$ is the restriction of  ${\underline P}$  to ${\underline M}_v$,
$$Q\circ{\underline P}({\underline a})={\underline \varphi}_v({\underline P}_v({\underline a}))e(\sigma(v))={\underline \varphi}_v({\underline a})e(\sigma(v)).$$
Thus, $P\circ Q$ and $Q\circ{\underline P}$ agree on ${\underline M}_v$ 
Suppose now that ${\underline a}$ belongs to $e(x){\underline M}e(y)$, where $(x,y)\in R$. We write ${\underline a}=e(x,v){\underline a}_v e(v,y)$ with ${\underline a}_v\in {\underline M}_v$. Since $e(x,v)$ and $e(v,x)$ belong to $M$, $Q({\underline a})=e(x,v)Q({\underline a}_v)e(v,y)$ and since $e(x,v)$ and $e(v,x)$ belong to $N_M(A)$,
$$P\circ Q({\underline a})=e(x,v)P\circ Q({\underline a}_v)e(v,y).$$
On the other hand, since $e(x,v)$ and $e(v,x)$ belong also to $N_{\underline M}({\underline A})$, ${\underline P}({\underline a})=e(x,v){\underline P}({\underline a}_v)e(v,y)$. Hence
$$Q\circ {\underline P}({\underline a})=e(x,v)Q\circ {\underline P}({\underline a}_v)e(v,y).$$
Therefore, $P\circ Q$ and $Q\circ{\underline P}$ agree on $e(x){\underline M}e(y)$. We deduce that they agree on $\underline M$. This implies that $Q({\underline A})=A$ and that we have the equality $Q\circ {\underline P}=P\circ Q_0$ where $Q_0$ is the restriction of $Q$ to $\underline A$.\\
\end{proof}

\begin{defn} Let $Q:\underline M\to M$ be a conditional expectation and let $A,\underline A$ be Cartan subalgebras of $M,\underline M$ respectively. We say that we have a {\it Cartan pairs inclusion}  if the conditions (i) and (ii) of the lemma are satisfied; we then write $(M,A)\subset (\underline M,\underline A)$. We say that the inclusion $(M,A)\subset (\underline M,\underline A)$ is {\it compatible} with $Q$ if the condition (iii) of the lemma is also satisfied.
\end{defn}

\begin{lem} Let $(M,A)\subset(\underline M,\underline A)$ be an inclusion of finite dimensional Cartan pairs. Then  the spectrum $\underline X$ of $\underline A$ is canonically identified to the fibered product $X\times_VE$, where $X$ is the spectrum of $A$, $E$ is the spectrum of $M'\cap {\underline A}$, $V$ is the spectrum of $Z(M)$ and the fibered product is relative to  the maps $r:X\to V$ and $s:E\to V$ given by the inclusions $Z(M)\subset M$ and $Z(M)\subset M'\cap {\underline A}$.
\end{lem}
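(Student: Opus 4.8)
The plan is to build the identification directly on minimal projections and then to reduce to a single central block of $M$, where the structure becomes a tensor product. Write $X$, $V$, $E$ for the spectra of $A$, $Z(M)$, $M'\cap\underline A$, and let $e(x)$, $z_v$, $f(a)$ be the corresponding minimal projections, indexed by $x\in X$, $v\in V$, $a\in E$. Since $Z(M)\subset A\subset\underline A$ and $Z(M)\subset M'\cap\underline A\subset\underline A$, each minimal projection $e(\underline x)$ of $\underline A$ lies under a unique $e(x)$ and a unique $f(a)$; this defines maps $\pi\colon\underline X\to X$ and $q\colon\underline X\to E$. Both $r\circ\pi$ and $s\circ q$ send $\underline x$ to the unique $v$ with $e(\underline x)\le z_v$, so $(\pi,q)$ takes values in $X\times_VE$. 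The statement then amounts to showing $(\pi,q)$ is a bijection, equivalently that $e(x)f(a)$ is a minimal projection of $\underline A$ when $r(x)=s(a)$ and is $0$ otherwise; the inverse will be $(x,a)\mapsto e(x)f(a)$.

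To prove this I would localise at a central block. The $z_v\in A\subset\underline A$ are orthogonal with sum $1$, so $e(x)f(a)=0$ unless $e(x)$ and $f(a)$ sit under a common $z_v$, i.e. unless $r(x)=s(a)$; this settles the vanishing case. Fix $v$, put $M_v=z_vM$ (a factor) and $C_v=M_v'\cap z_v\underline Mz_v$. Since $M_v$ is a full matrix algebra with unit $z_v$, there is the standard factorisation $z_v\underline Mz_v\cong M_v\otimes C_v$, and moreover $M'\cap\underline M=\bigoplus_v C_v$ (an element commuting with $M$ is block diagonal for the $z_v$ and commutes with each $M_v$). Under the factorisation the maximal abelian subalgebra $A_v=z_vA$ of $M_v$ becomes $A_v\otimes 1$, with atoms $p_i=e(x)$ for $r(x)=v$.

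The key point, and the step I expect to be the real obstacle, is to pin down $z_v\underline A$ inside $M_v\otimes C_v$. As the corner of the maximal abelian subalgebra $\underline A$ by the projection $z_v\in\underline A$, $z_v\underline A$ is maximal abelian in $z_v\underline Mz_v$ and contains $A_v\otimes 1$, hence sits inside $(A_v\otimes 1)'=A_v\otimes C_v$ and decomposes a priori as $\bigoplus_i p_i\otimes D_{v,i}$ with each $D_{v,i}$ a maximal abelian subalgebra of $C_v$. For a general maximal abelian subalgebra these blockwise choices need not agree, and this is exactly where condition (ii) is used: for $(x,y)\in R$ the matrix unit $e(x,y)$ lies in $N_M(A)\subset N_{\underline M}(\underline A)$, and writing $e(x,y)=E_{ij}\otimes 1$ one computes $e(x,y)\,(p_j\otimes D_{v,j})\,e(y,x)=p_i\otimes D_{v,j}$, which must again lie in $\underline A$. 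This forces $D_{v,j}\subset D_{v,i}$, and by symmetry all $D_{v,i}$ equal a common $D_v$; thus $z_v\underline A=A_v\otimes D_v$.

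It then remains only to read off the atoms. In block $v$ the atoms of $\underline A$ are the $p_i\otimes q_j$ with $p_i$ an atom of $A_v$ and $q_j$ an atom of $D_v$, while the restriction of $M'\cap\underline A$ to the block is $(1\otimes C_v)\cap(A_v\otimes D_v)=z_v\otimes D_v$, whose atoms are the $f(a)=z_v\otimes q_j$. Since $e(x)=p_i\otimes 1$, we get $e(x)f(a)=p_i\otimes q_j$, a minimal projection of $\underline A$, and letting $(x,a)$ range over $X\times_VE=\bigsqcup_v(X_v\times E_v)$ produces each atom of $\underline A$ exactly once. Hence $(x,a)\mapsto e(x)f(a)$ inverts $(\pi,q)$, giving the canonical identification $\underline X\cong X\times_VE$. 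As a consistency check this matches dimensions, $\dim\underline A=\sum_v n_v\dim D_v$ with $n_v=|X_v|$.
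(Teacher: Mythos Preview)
Your argument is correct, but it proceeds along a genuinely different route from the paper. The paper works dynamically: it uses condition~(ii) to let $N_M(A)$ act on $\underline X$ via $\underline\alpha$, identifies $M'\cap\underline A$ with the $\underline\alpha$-invariant functions (hence $E$ with the orbit space $\underline X/N_M(A)$), and then proves injectivity and surjectivity of $(\pi,q)$ by short orbit arguments---if $q(\underline x)=q(\underline y)$ then $\underline y=\underline\alpha_u(\underline x)$ for some $u\in N_M(A)$, and comparing with $\pi$ forces $\underline x=\underline y$; surjectivity is similar. Your approach is structural: you reduce to a central block, invoke the factorisation $z_v\underline Mz_v\cong M_v\otimes C_v$, and use condition~(ii) (through conjugation by matrix units) to force the a priori different masas $D_{v,i}$ to coincide, yielding $z_v\underline A=A_v\otimes D_v$ and then the atom count. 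Both proofs use condition~(ii) at the same essential place; yours has the advantage of producing the tensor identification $\underline A\cong A\otimes_{Z(M)}(M'\cap\underline A)$ (which the paper states as a corollary) directly and of making the inverse map $(x,a)\mapsto e(x)f(a)$ explicit, while the paper's argument is shorter and avoids the tensor bookkeeping.
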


\begin{proof} We let $\alpha$ be the action of $N_M(A)$ on $X$ and $\underline\alpha$ be the action of $N_{\underline M}(\underline A)$ on $\underline X$. We let $\pi:\underline X\to X$ and $\pi:\underline X\to E$ be the surjections corresponding to the inclusions $A\subset\underline A$ and $M'\cap\underline A\subset\underline A$. They satisfy $r\circ\pi=s\circ q$. Hence $(\pi,q)$ maps $\underline X$ into the fibered product  $X\times_VE$. This map is injective: let $\underline x,\underline y\in\underline X$ such that $\pi(\underline x)=\pi(\underline y)$ and $q(\underline x)=q(\underline y)$. The elements of $M'\cap\underline A$ are exactly the functions on $\underline X$ which are constant under the action $\underline\alpha$ of $N_M(A)$ on $\underline X$. Therefore, the relation $q(\underline x)=q(\underline y)$ implies the existence of $u\in N_M(A)$ such that $\underline y=\underline\alpha_u(\underline x)$. This implies that $\pi(\underline x)=\pi(\underline y)=\alpha_u(\pi(\underline x))$, hence $ue(x)=e(x)$ and $\underline y=\underline x$. The map is surjective. Let $(x,c)\in X\times E$ such that $r(x)=s(c)$. Pick $\underline y\in\underline X$ such that $q(\underline y)=c$. Since $r(\pi(\underline y))=r(x)$, there exists $u\in N_M(A)$ such that $x=\alpha_u(\pi(\underline y))$. Then $\underline x=\underline\alpha_u(\underline y)$ does the job.
\end{proof}

An equivalent statement of the lemma is that $\underline A$ is canonically identified to $A\otimes_{Z(M)}(M'\cap {\underline A})$.

\begin{lem}\label{commutant} Let $(M,A)\subset(\underline M,\underline A)$ be an inclusion of finite dimensional Cartan pairs. The commutant of $M$ in $\underline M$ is denoted by $M'$.
\begin{enumerate}
\item If $\underline a$ belongs to $M'$, then
$\underline e(xa)\underline a\underline e(yb)=0$ if  $x\not=y$.
\item $M'\cap\underline A$ is a Cartan subalgebra of $M'$.
\item the equivalence relation induced on the spectrum $E$ of $M'\cap\underline A$ by the normalizer is
$$R'=\{(a,b)\in E\times E: s(a)=s(b), r(a)=r(b)\}$$

\end{enumerate}
\end{lem}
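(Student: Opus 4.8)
The plan is to prove (i) by a short direct computation, and then to reduce (ii) and (iii) to the model inclusion $C^*(R)\subset C^*(\underline R)$, for which the commutant was computed in the lemma stated at the start of this section.

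For (i), I would use that the image of $A$ in $\underline A$ consists of the functions pulled back along the surjection $\pi\colon\underline X\to X$; concretely, writing points of $\underline X$ as $xa$, one has $e(x)=\sum_a\underline e(xa)$, the sum over those $a\in E$ with $s(a)=r(x)$, so that $\underline e(xa)\le e(x)$ for each such $a$. Given $\underline a\in M'$, it commutes with the minimal projection $e(x)\in A\subset M$, whence
$$\underline e(xa)\,\underline a\,\underline e(yb)=\underline e(xa)\,e(x)\,\underline a\,\underline e(yb)=\underline e(xa)\,\underline a\,e(x)\,\underline e(yb).$$
Since $\underline e(yb)\le e(y)$, we have $e(x)\underline e(yb)=e(x)e(y)\underline e(yb)$, and for $x\neq y$ the projections $e(x),e(y)$ are distinct minimal projections of the abelian algebra $A$, so $e(x)e(y)=0$; hence the right-hand side vanishes, giving (i).

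For (ii) and (iii) I would first produce matrix units for $\underline M$ adapted to the inclusion, thereby realizing $M\subset\underline M$ as the model inclusion. By the preceding lemma I identify $\underline X$ with $X\times_V E$, and I recall from its proof that $N_M(A)$ acts on $\underline X$ through the first coordinate only, fixing the map $q\colon\underline X\to E$; thus the chosen matrix unit $e(x,y)$ of $M$ sends $yb$ to $xb$ under the action $\underline\alpha$. Consequently the partial isometries $e(x,y)\underline e(yb)$, indexed by the subequivalence relation $\{(xb,yb):(x,y)\in R\}$ of $\underline R$, form a partial matrix unit, and by \lemref{partial} I extend it to a full matrix unit $(\underline e(xa,yb))$ of $\underline M$ satisfying $e(x,y)=\sum_b\underline e(xb,yb)$. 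This is exactly the inclusion map $j$ of the model. Moreover, since $Z(\underline M)\subset M'\cap\underline A$, the surjection $\underline X\to\underline V$ attached to $Z(\underline M)\subset\underline A$ factors as $\underline X\xrightarrow{q}E\xrightarrow{r}\underline V$, so that $\underline R=\{(xa,yb):r(a)=r(b)\}$, again as in the model.

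With this identification in hand I would invoke the commutant lemma of this section: the commutant of $C^*(R)$ in $C^*(\underline R)$ is $k(C^*(R'))$, with $R'=\{(a,b):s(a)=s(b),\ r(a)=r(b)\}$. Reading off the diagonal, an element $k(g)$ lies in $\underline A$ precisely when $g\in C(E)$, so $M'\cap\underline A=k(C(E))$ is the image under the isomorphism $k\colon C^*(R')\to M'$ of the canonical Cartan (equivalently, since everything is finite dimensional, maximal abelian) subalgebra $C(E)$; as Cartan subalgebras are preserved by isomorphism, this yields (ii), and the equivalence relation induced on the spectrum $E$ by the normalizer is precisely the relation $R'$ of the model, which is (iii). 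I expect the main obstacle to be the bookkeeping of the previous paragraph, namely checking that the matrix units coming from $M$ extend compatibly and that $\underline R$ has the asserted product form; once this is done, (ii) and (iii) are immediate from the model.
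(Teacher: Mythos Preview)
Your proof of (i) is the same as the paper's.

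For (ii) and (iii) you take a genuinely different route. You first realize the abstract inclusion as the model $C^*(R)\subset C^*(\underline R)$ by building compatible matrix units (using \lemref{partial} and the previous lemma on $\underline X\simeq X\times_V E$), and then invoke the ``exercise'' lemma identifying the commutant of $C^*(R)$ in $C^*(\underline R)$ with $k(C^*(R'))$. This is correct, and once the identification is in place (ii) and (iii) are indeed immediate. The paper instead argues directly, without constructing the full isomorphism. For (ii) it repeats the trick of (i) with the projections $\epsilon(c)=\sum_{r(x)=s(c)}\underline e(xc)$ in place of $e(x)$: if $\underline f\in M'$ commutes with every $\epsilon(a)$, then $\underline e(xa)\underline f\,\underline e(xb)=0$ for $a\neq b$, and together with (i) this forces $\underline f\in\underline A$, giving maximal abelianness. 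For (iii) it checks both inclusions by hand: if $\epsilon(a)M'\epsilon(b)\neq 0$, part (i) forces some $(xa,xb)\in\underline R$, hence $r(a)=r(b)$; conversely, given $(a,b)\in R'$ it picks a partial isometry $\underline u$ in $\underline M$ from $\underline e(xb)$ to $\underline e(xa)$ and averages it as $\sum_y e(y,x)\underline u\,e(x,y)$ to land in $M'$.

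Your approach front-loads the bookkeeping you flagged and then harvests the conclusions from the model; the paper keeps this lemma self-contained and defers the model identification to the proof of \thmref{fd expectation}. The paper's route is shorter here because it never needs the full product description of $\underline R$, only the vanishing of the relevant matrix entries.
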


\begin{proof} i) Assume that $\underline f$ commutes with $M$. If $x\not=y$,
$$ \underline e(xa)\underline f\underline e(yb)= \underline e(xa)e(x)\underline f\underline e(yb)= \underline e(xa)\underline f e(x)\underline e(yb)=0.$$
ii) For $c\in E$, we denote by $\epsilon(c)$ the corresponding projection in $M'\cap\underline A$. According to (i), $\epsilon(c)=\sum_{r(x)=s(c)} \underline e(xc)$. Suppose that $\underline f\in M'$ commutes with the elements of $M'\cap\underline A$. Consider $xa$ and $xb$ with $a\not=b$. Then
$$ \underline e(xa)\underline f\underline e(xb)= \underline e(xa)\epsilon(a)\underline f\underline e(yb)= \underline e(xa)\underline f \epsilon(a)\underline e(yb)=0.$$
Thus $\underline e(xa)\underline f\underline e(yb)=0$ if $xa\not=yb$, therefore $\underline f$ belongs to $\underline A$.\\
(iii) Assume that $\epsilon(a)M'\epsilon(b)\not=0$. Then according to (i), there exists $x\in X$ such that $(xa,xb)\in \underline R$. This implies that $(a,b)\in R'$. Conversely, if $(a,b)\in R'$, we pick $x\in X$ such that $r(x)=s(a)=s(b)$. We choose a partial isometry $\underline u\in\underline M$ such that $\underline u\underline u^*=\underline e(xa)$, $\underline u^*\underline u=\underline e(xb)$. Then $\sum e(y,x)\underline u e(x,y)$, where $(e(x,y))_{(x,y)\in R}$ is a matrix unit for $R$ and the sum is over the $y$'s such that $r(y)=r(x)$ is a partial isometry in $M'$ with domain $\epsilon(b)$ and range $\epsilon(a)$.

\end{proof}

\begin{lem}\label{transition probability} Let $(M,A)\subset(\underline M,\underline A)$ be an inclusion of finite dimensional Cartan pairs and let $Q:\underline M\to M$ be a faithful conditional expectation which satisfies the condition (iii) of \lemref{key lemma}.
 Then, with above notations, there exists  a transition probability $p$ on the graph $E$ such that for all $c\in E$,
$$Q(\epsilon(c))=p(c)e(s(c))$$
where $\epsilon(c)$ is the minimal projection in $M'\cap\underline A$ corresponding to $c\in E$ and $e(v)$ is the minimal projection in $Z(M)$ corresponding to $v\in V$. 
\end{lem}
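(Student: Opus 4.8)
The plan is to compute $Q(\epsilon(c))$ directly using the structural facts accumulated in the preceding lemmas, and to verify that the resulting function $p$ satisfies the two defining properties of a transition probability: positivity and the normalization $\sum_{s(c)=v} p(c) = 1$. The central input is condition (iii) of \lemref{key lemma}, namely $Q \circ \underline P = P \circ Q_0$, together with the description of $\epsilon(c)$ from \lemref{commutant}(i), which gives $\epsilon(c) = \sum_{r(x)=s(c)} \underline e(xc)$. Since each $\epsilon(c)$ lies in $M' \cap \underline A \subset \underline A$, it is fixed by $\underline P$, so $Q(\epsilon(c)) = Q(\underline P(\epsilon(c))) = P(Q_0(\epsilon(c)))$; this places $Q(\epsilon(c))$ inside $A$, and I must identify which element of $A$ it is.

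**First I would** pin down the range of $Q(\epsilon(c))$. Because $\epsilon(c)$ is central in $M'$ and $Q$ is a conditional expectation onto $M$, the $M$-bimodule property of $Q$ forces $Q(\epsilon(c))$ to lie in the center $Z(M)$: for any $m \in M$ we have $m\, Q(\epsilon(c)) = Q(m\epsilon(c)) = Q(\epsilon(c) m) = Q(\epsilon(c))\, m$, using that $\epsilon(c) \in M'$ commutes with $m$. Moreover $\epsilon(c)$ is supported on the single vertex $v = s(c)$, in the sense that $e(v)\epsilon(c) = \epsilon(c)e(v) = \epsilon(c)$, where $e(v)$ is the central projection of $M$ corresponding to $v$ (the sum of the $e(x)$ over $r(x)=v$). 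Hence $Q(\epsilon(c)) = e(v)\,Q(\epsilon(c))$ is a positive element of the one-dimensional corner $e(v)Z(M)$, which is spanned by $e(v)$. Therefore $Q(\epsilon(c)) = p(c)\, e(s(c))$ for a unique scalar $p(c) \ge 0$, and $p(c) > 0$ precisely because $Q$ is faithful and $\epsilon(c) \neq 0$. This already establishes the displayed formula; what remains is to check that $p$ is genuinely a transition probability.

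**The main work** is the normalization. The projections $\{\epsilon(c) : s(c) = v\}$ are mutually orthogonal and sum to $e(v)$: indeed $M' \cap \underline A$ restricted to the corner over $v$ is a maximal abelian subalgebra whose minimal projections are exactly the $\epsilon(c)$ with $s(c) = v$, and their sum is the unit $e(v)$ of that corner. Applying the (linear, unital) map $Q$ and using $Q(e(v)) = e(v)$, I obtain $\sum_{s(c)=v} p(c)\, e(s(c)) = \sum_{s(c)=v} Q(\epsilon(c)) = Q\bigl(\sum_{s(c)=v}\epsilon(c)\bigr) = Q(e(v)) = e(v)$. Comparing coefficients of $e(v)$ yields $\sum_{s(c)=v} p(c) = 1$, which is exactly the transition-probability condition from the definition.

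**The step I expect to be the main obstacle** is the clean justification that $Q(\epsilon(c))$ lands in the one-dimensional space $e(v)Z(M)$ rather than merely in $e(v)M$; this is where I must use both that $\epsilon(c) \in M'$ (to get centrality in $M$) and that its support is the single vertex $v$ (to collapse the center to a scalar multiple of $e(v)$). Once that localization is in place, faithfulness gives strict positivity and unitality of $Q$ gives the normalization essentially for free, so the computation of $p$ and the verification of both defining properties follow without further difficulty.
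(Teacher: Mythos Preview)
Your proof is correct and follows essentially the same path as the paper's: both use the $M$-bimodule property of $Q$ together with $\epsilon(c)\in M'$ to place $Q(\epsilon(c))$ in $Z(M)$, then localize to the one-dimensional corner $e(s(c))Z(M)$ via the support of $\epsilon(c)$, invoke faithfulness for strict positivity, and obtain the normalization from $\sum_{s(c)=v}\epsilon(c)=e(v)$. One minor wording slip: $\epsilon(c)$ is not \emph{central in $M'$} (it lies in the Cartan subalgebra $M'\cap\underline A$, not in $Z(M')$), but your argument only uses $\epsilon(c)\in M'$, which is exactly what is needed; note also that your preliminary appeal to condition~(iii) is harmless but redundant, since the centrality computation already lands $Q(\epsilon(c))$ in $Z(M)\subset A$.
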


\begin{proof} As earlier, we denote by $Q_0$ the restriction of $Q$ to $\underline A$. We first check that $Q_0(\epsilon(c))$ belongs to $Z(M)$: for $a\in M$, 
$$aQ_0(\epsilon(c))=Q_0(a\epsilon(c))=Q_0(\epsilon(c)a)=Q_0(\epsilon(c))a$$
Then, we observe that $e(x)\epsilon(c)=0$ if $r(x)\not=s(c)$. Therefore $e(v)Q_0(\epsilon(c))=0$ for all $v\in V$ distinct from $s(c)$: $Q_0(\epsilon(c))$ is proportional to $e(s(c))$. The constant of proportionality is non zero because $Q$ is supposed to be faithful. The equality $e(v)=\sum_{s(c)=v}\epsilon(c)$ gives the equality $\sum_{s(c)=v}p(c)=1$.
\end{proof}

\begin{thm}\label{fd expectation} Let $Q:\underline M\to M$ be a faithful conditional expectation on a finite dimensional C*-algebra and let $A$ be Cartan subalgebra of $M$. We let $(X,R)$ be the associated equivalence relation. Then,
\begin{enumerate}
\item there exists $\underline A$ Cartan subalgebra of $\underline M$ such that $(M,A)\subset (\underline M,\underline A)$ is a Cartan pairs inclusion compatible with $Q$.
\item any isomorphism $\Phi:  M\to  C^*(R)$ carrying $A$ onto $C_0(X)$ can be extended to an an isomorphism $\underline\Phi: \underline M\to  C^*(\underline R)$ carrying $Q$ into the model expectation $Q_p: C^*(\underline R)\to C^*(R)$ constructed from  the graph $(V,E,\underline V)$ of the inclusion, the spectrum $X$ of  $A$ and the transition probability $p$ of \lemref{transition probability}.
\end{enumerate} 
\end{thm}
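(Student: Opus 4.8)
The plan is to obtain part (i) directly from \lemref{key lemma} and to construct the isomorphism of part (ii) from a matrix unit of $\underline M$ that refines the one $\Phi$ furnishes on $M$, checking at the end that this isomorphism intertwines $Q$ and $Q_p$. For (i) there is nothing to do beyond citing \lemref{key lemma}: applied to $M\subset\underline M$, to $Q$ and to $A$, it produces a Cartan subalgebra $\underline A$ of $\underline M$ whose properties (i) and (ii) are exactly the statement that $(M,A)\subset(\underline M,\underline A)$ is a Cartan pairs inclusion, while (iii) is the compatibility with $Q$.

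For (ii) I would first pin down the target model. By the lemma identifying $\underline X$ with the fibered product $X\times_VE$, the spectrum of $\underline A$ is the set of pairs $xa$ with $r(x)=s(a)$, which is precisely the set $\underline X$ of the construction preceding \propref{model expectation}. I then identify the equivalence relation cut out on $\underline X$ by $N_{\underline M}(\underline A)$: since $Z(\underline M)\subset M'\cap\underline A\subset\underline A$, the quotient map from $\underline X$ onto the spectrum $\underline V$ of $Z(\underline M)$ factors as $xa\mapsto r(a)$, so $xa$ and $yb$ are equivalent iff $r(a)=r(b)$, i.e. the relation is $\underline R$. Hence $C^*(\underline R)$ is the correct target, with transition probability the $p$ of \lemref{transition probability} determined by $Q(\epsilon(c))=p(c)e(s(c))$.

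Next I would build the matrix unit. From $\Phi$ I get a matrix unit $(e(x,y))_{(x,y)\in R}$ of $M$ with $e(x,x)=e(x)$, and from \lemref{commutant}(ii) a matrix unit $(\epsilon(a,b))_{(a,b)\in R'}$ of $M'$ with $\epsilon(a,a)=\epsilon(a)$. On the subequivalence relation $\underline R_0=\{(xa,yb):s(a)=s(b),\ r(a)=r(b)\}$ of parallel edges I set $\underline e(xa,yb)=\epsilon(a,b)e(x,y)$; because $M$ and $M'$ commute, this is a partial matrix unit whose diagonal is the family of minimal projections $\underline e(xa)=\epsilon(a)e(x)$ of $\underline A$. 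By \lemref{partial} I extend it to a full matrix unit indexed by $\underline R$, and the resulting matrix unit defines $\underline\Phi:\underline M\to C^*(\underline R)$ carrying $\underline A$ onto $C(\underline X)$. Since $e(x)=\sum_{s(c)=r(x)}\epsilon(c)$ gives $e(x,y)=\sum_c\underline e(xc,yc)$, the map $\underline\Phi$ agrees on $M$ with $j\circ\Phi$, so it extends $\Phi$.

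Finally I would verify $\Phi\circ Q=Q_p\circ\underline\Phi$ by evaluating on the $\underline e(xa,yb)$. On the model side $Q_p$ sends the matrix unit at $(xa,yb)$ to $p(a)$ times the matrix unit at $(x,y)$ if $a=b$ and to $0$ otherwise, so it suffices to prove $Q(\underline e(xa,yb))=\delta_{ab}\,p(a)\,e(x,y)$. Sandwiching by the projections $e(x)\in M$ and using $M$-bimodularity shows $Q(\underline e(xa,yb))=e(x)Q(\underline e(xa,yb))e(y)$, so it is a scalar multiple of $e(x,y)$, and $0$ unless $r(x)=r(y)$, i.e. unless $s(a)=s(b)$. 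When $a=b$ I write $\underline e(xa,ya)=e(x,y)\underline e(ya)$ and use left $M$-linearity together with $Q(\underline e(ya))=p(a)e(y)$, the latter following from \lemref{transition probability} and bimodularity. The main obstacle is the vanishing for $a\neq b$ with $s(a)=s(b)$ (distinct parallel edges): here the conditional-expectation structure alone does not force $Q(\underline e(xa,yb))=0$, and I must invoke the compatibility relation $P\circ Q=Q\circ\underline P$ established in \lemref{key lemma}. Routing through a base point by $M$-bimodularity reduces this to the case $x=y$, where $Q(\underline e(xa,xb))$ is a multiple of $e(x)$ and hence lies in $A$, while $\underline P(\underline e(xa,xb))=0$ because $xa\neq xb$; then $Q(\underline e(xa,xb))=P(Q(\underline e(xa,xb)))=Q(\underline P(\underline e(xa,xb)))=0$. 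This gives the required formula and so that $\underline\Phi$ carries $Q$ into $Q_p$.
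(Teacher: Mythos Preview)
Your argument is correct and follows the paper's proof essentially verbatim: part (i) is \lemref{key lemma}, and for part (ii) you form the partial matrix unit $\underline e(xa,yb)=e(x,y)\epsilon(a,b)$ on the sub-relation $\{s(a)=s(b),\ r(a)=r(b)\}$, extend it to all of $\underline R$ via \lemref{partial}, and let the resulting matrix unit define $\underline\Phi$; the paper does the same but simply asserts the intertwining $\underline\Phi\circ Q=Q_p\circ\underline\Phi$, whereas you supply the verification (using $P\circ Q=Q\circ\underline P$ for the off-diagonal vanishing), which is a welcome addition.

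One small slip to fix: the identity $e(x)=\sum_{s(c)=r(x)}\epsilon(c)$ is false, since by \lemref{commutant} the right-hand side equals the central projection $e(r(x))\in Z(M)$, not the minimal projection $e(x)\in A$. What actually gives your (correct) conclusion $e(x,y)=\sum_c\underline e(xc,yc)$ is $\sum_c\underline e(xc,yc)=\bigl(\sum_{s(c)=r(x)}\epsilon(c)\bigr)e(x,y)=e(r(x))\,e(x,y)=e(x,y)$.
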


\begin{proof} The first assertion is \lemref{key lemma}. We fix a Cartan subalgebra $\underline A$ satisfying (i). We recall that the spectrum $\underline X$ of $\underline A$ can be identified with the fibered product $X\times_V E$ and that the spectrum of $M'\cap\underline A$ can be identified with $E$. We also recall from \lemref{commutant} that $(M',M'\cap\underline A)$ is a Cartan pair defining the equivalence relation $R'$ on $E$. We pick  a matrix unit $(\epsilon(a,b))_{(a,b)\in R'}$ for the Cartan pair $(M',M'\cap\underline A)$.  Let $\Phi:  M\to  C^*(R)$ be an isomorphism carrying $A$ onto $C_0(X)$. There exists a unique matrix unit $(e(x,y))_{(x,y)\in R}$ for the Cartan pair $(M,A)$ which is sent by $\Phi$ onto the canonical matrix unit of $C^*(R)$. We define
$\underline e(xa,yb)=e(x,y)\epsilon(a,b)$ if $r(x)=r(y)=s(a)=s(b)$ and $r(a)=r(b)$. This defines a partial matrix unit on a subequivalence relation of $\underline R$. According to \lemref{partial}, it can be completed into a full matrix unit $(\underline e(xa,yb))_{(xa, yb)\in\underline R}$. The isomorphism $\underline \Phi: {\underline M}\to C^*({\underline R})$ defined by this matrix unit extends $\Phi$ and satisfies $\underline \Phi\circ Q= Q_p\circ\underline \Phi$.
\end{proof}

In particular, the theorem shows that our path model of a conditional expectation gives every faithful conditional expectation. We can recover from this theorem the main result of \cite{dav:expectation}, namely every conditional expectation on a finite dimensional C*-algebra can be written as a pinching followed by slicing and averaging: one introduces an intermediate level $V_1$  in the inclusion graph $(V,E,\underline V)$, whose vertices label the edges (thus $V_1=E$). The graph $(V,E,\underline V)$ is then written as the concatenation of two graphs $(V,E_1,V_1)$ and $(V_1,E_2,\underline V)$. In the first graph, the vertices of $V_1$ receive a single edge. In the second graph, the vertices of $V_1$ emit a single edge. With the ingredient $X_1=\underline X$ and $(V_1,E_2,\underline V)$, our recipe gives the inclusion $C^*(R_1)\subset C^*(\underline R)$ where $(xa,yb)\in R_1$ if and only if $a=b$. The transition probability $p_2\equiv 1$  gives the restriction map $Q_2: C^*(\underline R)\to C^*(R_1)$ as its associated conditional expectation. It is a pinching: in other words, it is of the form 
$$Q_2(\underline f)=\sum_{c\in E}\epsilon(c)\underline f \epsilon(c).$$
The conditional expectation $Q_1:C^*(R_1)\to C^*(R)$ is an averaging: for every $v\in V$
$$Q_1(f)(x,y)=\sum_{s(c)=v}p(c)f(xc,yc)\quad {\rm for}\quad r(x)=r(y)=v.$$
In \cite{ac:krieger}, A. Connes uses a similar decomposition of an inclusion of type I von Neumann algebras to construct inclusions of Cartan pairs.

\section{Random walks on discrete Bratteli diagrams.}

We first recall the classical definition of a Bratteli diagram. 
\begin{defn}\label{Bratteli}
A {\it Bratteli diagram} is a directed graph $(V,E)$ where the set of vertices $V=\coprod_{n=0}^\infty V(n)$ and the set of edges $E=\coprod_{n=1}^\infty E(n)$ are graded. For each  $n\ge 1$, $s(E(n))=V(n-1)$ and $r(E(n))=V(n)$, where $s(e)$ and $r(e)$ are respectively the source and the range of the edge $e$.
\end{defn}

We assume that each level of vertices $V(n)$ is at most countable; we also assumes that each vertex emits finitely many but at least one edge and that each vertex of a level $n\ge 1$ receives finitely many but at least one edge.

\begin{defn}\label{transition} Let $(V,E)$ be a Bratteli diagram.
\begin{itemize} 
\item A {\it transition probability} is a map $p$ assigning to each vertex $v\in V$ a probability measure $p(v)$ on the set of edges $E_v=s^{-1}(v)$ emanating from $v$. We shall view $p$ as a map $p:E\to\R$ such that for all $v\in V$, $\sum_{s(e)=v}p(e)=1$. We shall denote by $p_n$ its restriction to $E(n)$.
\item An {\it initial probability measure} is a probability measure $\nu_0$ on the set of initial vertices $V(0)$.
\item A {\it random walk} is a pair $(p,\nu_0)$, where $p$ is a transition probability and $\nu_0$ is an initial probability measure.
\end{itemize} 
\end{defn}

We shall assume in this section that $p$ and $\nu_0$ have full support, in the sense that $p(e)>0$ for all $e\in E$ and $\mu_0(v)>0$ for all $v\in V(0)$.\\ A Bratteli diagram $(V,E)$ defines an \'etale equivalence relation $(X,R)$ called the {\it tail equivalence relation} of the diagram: $X$ is the set of infinite paths $x=e_1e_2\ldots$ where $e_n\in E(n)$ and $r(e_n)=s(e_{n+1})$. It is a locally compact Hausdorff totally disconnected space admitting the cylinders
$$Z(a)=\{ae_{n+1}e_{n+2}\ldots\}$$
where $a=a_1a_2\ldots a_n$ is a finite path (we assume implicitly that $a_i\in E(i)$), as a base of compact open subsets.  Two infinite paths $x=e_1e_2\ldots$ and $y=f_1f_2\ldots$ are tail equivalent if there exists $n$ such that $e_i=f_i$ for $i>n$. Its graph $R$ is a locally compact Hausdorff totally disconnected space admitting the cylinders
$$Z(a,b)=\{(ae_{n+1}e_{n+2}\ldots,be_{n+1}e_{n+2}\ldots)\}$$
where $(a,b)$ is a pair of equivalent finite paths: this means that they have same length $n$ and same range $r(a)=r(b)$, where we define $r(a_1a_2\ldots a_n)=r(a_n)\in V(n)$.\\
A random walk on a Bratteli diagram defines a measure on the path space $X$; it is a particular case of the well-known construction of Markov measures.

\begin{prop} Given a random walk $(p,\nu_0)$ on a Bratteli diagram $(V,E)$, there is a unique probability measure $\mu$ on $X$ , called the Markov measure of the random walk whose values on cylinder sets is given by
$\,\mu(Z(a))=\nu_0(s(a))p(a)\,$  where, for the finite path $a=a_1a_2\ldots a_n$, 
 $p(a)=p_1(a_1)p_2(a_2)\ldots p_n(a_n)\quad {\rm and}\quad s(a)=s(a_1)$.
\end{prop}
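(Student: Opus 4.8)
The plan is to define $\mu$ on cylinder sets by the prescribed formula, to verify that this assignment extends to a consistent finitely additive premeasure on the Boolean algebra $\mathcal{A}$ of clopen subsets of $X$ generated by the cylinders, and then to invoke the Carath\'eodory extension theorem to obtain the unique Borel probability measure. The probabilistic normalization in \defnref{transition} is exactly what makes the values consistent across levels.

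First I would record the two elementary identities that drive everything. For a finite path $a=a_1a_2\ldots a_n$ and an edge $e\in E(n+1)$ with $s(e)=r(a)$, the concatenation $ae$ satisfies $s(ae)=s(a)$ and $p(ae)=p(a)p_{n+1}(e)$, while the cylinder $Z(a)$ decomposes as the \emph{finite} disjoint union $Z(a)=\bigsqcup_{s(e)=r(a)}Z(ae)$, the branching being finite because each vertex emits finitely many edges. Combining these with the relation $\sum_{s(e)=r(a)}p_{n+1}(e)=1$ gives
$$\sum_{s(e)=r(a)}\nu_0(s(ae))p(ae)=\nu_0(s(a))p(a)\sum_{s(e)=r(a)}p_{n+1}(e)=\nu_0(s(a))p(a),$$
so the proposed values are additive under one-step refinement. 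Taking for $a$ the empty-edge path $v\in V(0)$ (with $p(v)=1$) gives $\mu(Z(v))=\nu_0(v)$, whence $\mu(X)=\sum_{v\in V(0)}\nu_0(v)=1$.

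Next I would establish well-definedness and finite additivity on $\mathcal{A}$. Any two cylinders $Z(a)$, $Z(b)$ are either disjoint or nested (one obtained from the other by extending the shorter path), so every element of $\mathcal{A}$ admits a representation as a finite disjoint union of cylinders of a common length $n$; iterating the one-step identity shows the value of $\mu$ is independent of the level $n$ chosen, and this makes $\mu$ a well-defined, finitely additive, $[0,1]$-valued set function on $\mathcal{A}$. Countable additivity then comes for free from compactness: each cylinder $Z(a)$ is compact, being a closed subset of a product of finite edge-sets by the finite-branching hypothesis. Hence if a set in $\mathcal{A}$ is a countable disjoint union of sets in $\mathcal{A}$, compactness forces all but finitely many summands to be empty, so countable additivity reduces to the finite additivity already established, and $\mu$ is a genuine premeasure on $\mathcal{A}$.

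Finally, since the cylinders generate the Borel $\sigma$-algebra of $X$ and form a $\pi$-system (any two are disjoint or nested), the Carath\'eodory extension theorem produces a Borel measure extending $\mu$, and the Dynkin $\pi$--$\lambda$ uniqueness argument shows it is the unique probability measure taking the prescribed values on cylinders. I expect the only genuinely non-formal point to be the reduction of countable to finite additivity, which rests squarely on the compactness of the cylinders guaranteed by the finite-branching assumption; everything else is bookkeeping with the identities $s(ae)=s(a)$, $p(ae)=p(a)p_{n+1}(e)$ and $\sum_{s(e)=v}p(e)=1$.
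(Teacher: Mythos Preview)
The paper does not supply a proof of this proposition; it is stated as a standard fact, and later in Section~4 (for general Borel Bratteli diagrams) the paper recalls the Kolmogorov/projective--limit construction, defining consistent measures $\mu^n$ on the finite path spaces $X^n$ and passing to the limit, with a reference to Neveu. Your Carath\'eodory--plus--compactness argument is a correct alternative, and in the discrete finitely--branching setting it is arguably the most direct route: the identity $\sum_{s(e)=r(a)}p_{n+1}(e)=1$ gives consistency across levels, and compactness of each $Z(a)$ reduces $\sigma$-additivity to finite additivity, exactly as you say.

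One small technical point deserves tightening. The paper allows each $V(n)$, and in particular $V(0)$, to be countably infinite. In that case $X=\bigsqcup_{v\in V(0)}Z(v)$ is not compact, and the complement of a single cylinder is not a \emph{finite} disjoint union of cylinders; so your claim that ``every element of $\mathcal A$ admits a representation as a finite disjoint union of cylinders of a common length~$n$'' fails for the full Boolean algebra with unit. The fix is routine: work instead with the semi-ring of cylinders (or the ring of finite disjoint unions of cylinders), where your compactness argument applies verbatim, extend by Carath\'eodory, and then recover $\mu(X)=\sum_{v\in V(0)}\nu_0(v)=1$ from countable additivity of the extension. Likewise, the phrase ``closed subset of a product of finite edge-sets'' is a little loose since $E(k)$ itself may be infinite; the clean statement is that $Z(a)$ is the inverse limit of the finite sets of paths of length $m$ extending $a$, hence compact. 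None of this affects the substance of your argument.
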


As observed in \cite[Section 3.2]{ren:AF}, Markov measures are quasi-invariant under the tail equivalence relation. Let us recall that a measure $\mu$ on $X$ is quasi-invariant under the equivalence relation $R$ if the measures $r^*\mu$ and $s^*\mu$ on $R$ are equivalent, where $\int f d (r^*\mu)=\int\sum_y f(x,y)d\mu(x)$ for $f\in C_c(R)$ and $s^*\mu$ is similarly defined. Then its Radon-Nikodym derivative $D_\mu=d(r^*\mu)/d(s^*\mu)$ is a cocycle, i.e. it satisfies $D_\mu(x,y)D_\mu(y,z)=D_\mu(x,z)$ for a.e. $(x,y,z)\in R^{(2)}$. Here is a way to construct cocycles on the tail equivalence relation $R$ of a Bratteli diagram $(V,E)$.

\begin{defn}\label{quasi-product cocycle} Let $G$ be a group. A map $D:R\to G$ is called a {\it quasi-product cocycle} if there exists a map $q: E\to G$, called a potential, such that for all pairs of equivalent finite paths $(a,b)$ and all $(az,bz)\in Z(a,b)$, $D(az,bz)=q(a)q(b)^{-1}$ and where, as before, $q(a_1a_2\ldots a_n)=q(a_1)q(a_2)\ldots q(a_n)$.
\end{defn}

Since a quasi-product cocycle is locally constant, it takes at most countably many values and it is continuous. The following result, which is a simple observation, is essential here.

\begin{prop}\label{D-measure} \cite[Proposition 3.3]{ren:AF} Let $(p,\nu_0)$ be a random walk  on a Bratteli diagram $(V,E)$.
\begin{enumerate}
\item The associated Markov measure $\mu$ is quasi-invariant under the tail equivalence relation $R$
\item Its Radon-Nikodym derivative $D_\mu$ is the quasi-product cocycle given by the potential $q=(q_n)$ defined  by the relation
$$\nu_{n-1}(s(e))p_n(e)=q_n(e)\nu_n(r(e))\quad{\rm for}\quad e\in E(n).$$
where $\nu_n$ is the distribution of the random walk on $V(n)$, defined inductively by $\nu_n(w)=\sum_{r(e)=w}p_n(e)\nu_{n-1}(s(e))$ for $w\in V(n)$.
\end{enumerate}
\end{prop}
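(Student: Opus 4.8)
The plan is to reduce everything to the cylinder sets $Z(a,b)$, which form a countable base of compact open subsets generating the Borel structure of $R$, and to exploit the fact that on such cylinders the Markov measure has an explicit product form. First I would check that the inductively defined $\nu_n$ is genuinely a probability measure on $V(n)$: summing the defining relation over $w\in V(n)$ and reorganizing the resulting double sum over edges gives $\sum_w\nu_n(w)=\sum_v\nu_{n-1}(v)\sum_{s(e)=v}p_n(e)=\sum_v\nu_{n-1}(v)=1$. Since $p$ and $\nu_0$ have full support and every vertex of a level $n\ge 1$ receives at least one edge, an immediate induction gives $\nu_n(w)>0$ for all $w$, so each $q_n(e)=\nu_{n-1}(s(e))p_n(e)/\nu_n(r(e))$ is a well-defined strictly positive number and the quasi-product cocycle with potential $q$ makes sense.

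The crux is a short computation on cylinders. For a finite path $a=a_1\ldots a_n$ one has $\mu(Z(a))=\nu_0(s(a))p(a)$, and for any continuation $c$ starting at $r(a)$, $\mu(Z(ac))=\nu_0(s(a))p(a)p(c)$. Hence, given a pair $(a,b)$ of equivalent finite paths (same length $n$ and $r(a)=r(b)$) and any common continuation $c$,
$$\frac{\mu(Z(ac))}{\mu(Z(bc))}=\frac{\nu_0(s(a))p(a)}{\nu_0(s(b))p(b)}=\frac{\mu(Z(a))}{\mu(Z(b))},$$
a ratio independent of $c$. This is exactly what forces $\mu$ to be quasi-invariant with a locally constant cocycle: the partial homeomorphism $Z(a)\to Z(b)$, $az\mapsto bz$, transforms $\mu|_{Z(a)}$ into a constant multiple of $\mu|_{Z(b)}$. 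Running over the countably many $Z(a,b)$ that cover $R$ shows $r^*\mu$ and $s^*\mu$ are equivalent, which is assertion (i), and simultaneously pins down the Radon--Nikodym derivative on each $Z(a,b)$.

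To identify $D_\mu$ and match it with $q$, I would test the defining relation $\int f\,d(r^*\mu)=\int f\,D_\mu\,d(s^*\mu)$ against $f=\mathbf{1}_{Z(a,b)}$. The left side collapses to $\mu(Z(a))$ (for fixed $x=az\in Z(a)$ there is a unique $y=bz$ with $(x,y)\in Z(a,b)$), while the right side gives $D_\mu(az,bz)\,\mu(Z(b))$ since $D_\mu$ is constant on $Z(a,b)$; hence $D_\mu(az,bz)=\mu(Z(a))/\mu(Z(b))$. Finally the telescoping identity
$$q(a)=\prod_{i=1}^{n}\frac{\nu_{i-1}(s(a_i))p_i(a_i)}{\nu_i(r(a_i))}=\frac{\nu_0(s(a))p(a)}{\nu_n(r(a))}=\frac{\mu(Z(a))}{\nu_n(r(a))},$$
in which the intermediate factors cancel because $r(a_i)=s(a_{i+1})$ gives $\nu_i(r(a_i))=\nu_i(s(a_{i+1}))$, together with $r(a)=r(b)$, yields $q(a)q(b)^{-1}=\mu(Z(a))/\mu(Z(b))=D_\mu(az,bz)$. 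This is the asserted potential, and consistency of $D_\mu$ as a cocycle (independence of the representative of $(az,bz)$ by longer initial segments) follows from the multiplicativity $q(ac)=q(a)q(c)$.

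The main obstacle I anticipate is bookkeeping rather than conceptual depth: getting the orientation conventions for $r^*\mu$, $s^*\mu$, and hence for $D_\mu$, right so that the potential appears as $q(a)q(b)^{-1}$ and not its inverse; and justifying the passage from the cylinder-level computation to the global equivalence of $r^*\mu$ and $s^*\mu$ on all of $R$. The latter is a routine measure-theoretic extension, since $R$ is a countable union of the compact open bisections underlying the maps $az\mapsto bz$, on each of which the densities have already been computed explicitly.
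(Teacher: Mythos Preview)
Your argument is correct and is the natural one: compute $\mu$ on cylinders, observe that the ratio $\mu(Z(ac))/\mu(Z(bc))$ is independent of the continuation $c$, deduce quasi-invariance together with the value of $D_\mu$ on each bisection $Z(a,b)$, and then telescope the product defining $q(a)$ to recover $\mu(Z(a))/\nu_n(r(a))$. The only point to watch is the one you already flag, namely the orientation convention for $D_\mu=d(r^*\mu)/d(s^*\mu)$; with the paper's convention your formula $D_\mu(az,bz)=q(a)q(b)^{-1}$ is the right way round.

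As for comparison: the paper does not supply a proof here. The proposition is quoted from \cite[Proposition 3.3]{ren:AF} and introduced as ``a simple observation'', so there is nothing in the present text to set your argument against. Your write-up is exactly the kind of verification one would expect behind that citation.
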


Note that the Radon-Nikodym derivative depends only on the potential $q$. This potential $q$ has a simple probabilitstic interpretation: it is the {\it cotransition probability} of the random walk: let $e$ be an edge in $E(n)$ with range $r(e)=w$, then $q(e)$ is the probability that the random walk passes through $e$ given that it is at $w$ at time $n$.  In his recent papers \cite{ver:equipped,ver:graded graphs}, A.~Vershik also emphasizes the importance of cotransition probabilities in the asymptotic study of random walks on Bratteli diagrams. I thank him for the reference  \cite{dyn:exit} on the subject. Cotransition probabilities are called ``{\it backward transition probabilities}''  in \cite{ks:markov}. Note that the cotransition probability $q$ depends not only on the transition probability $p$ but also on the initial measure $\nu_0$.  As shown by the next example, different random walks may share the same cotransition probability.

\begin{ex}\label{Pascal}
Random walks on the Pascal triangle. It is the time development of the simple random walk on $\Z$. Here, the Bratteli diagram is $(V,E)$ where
$$V(n)=\{(n,k): k=0,1,\ldots, n\};\quad E(n)=\{(n-1,k,\epsilon): k=0,1,\ldots, n-1; \epsilon\in\{0,1\}\}.$$
We consider the random walk defined by the transition probability 
$$p_n(n-1,k)=(1-t)\delta_{(n-1,k,0)} + t \delta_{(n-1,k,1)}$$
where $0<t<1$. Since $V(0)$ has a single vertex, the initial measure $\nu_0$ is the point mass at this vertex. The infinite path can be written as the infinite product $X=\prod_{n=1}^\infty\{0,1\}$. Then, the Markov measure is the product measure $\mu_t=\prod_{n=1}^\infty((1-t)\delta_0+t\delta_1)$. An elementary computation gives the cotransition probability
$$q_n(n,k)=(1-\frac{k}{n})\,\delta_{(n-1,k,0)} + \frac{k}{n}\, \delta_{(n-1,k-1,1)}$$
It does not depend on $t$. For a finite path $\epsilon_1\epsilon_2\ldots\epsilon_n$ ending at $(n,k=\epsilon_1+\ldots+\epsilon_n)$, one has
$$q(\epsilon_1\epsilon_2\ldots\epsilon_n)={n\choose k}^{-1}.$$
One deduces that the Radon-Nikodym of $\mu_t$ is $D\equiv 1$. In other words, the measures $\mu_t$ are invariant under the tail equivalence relation on $(V,E)$. It is a well-known result. It is also well-known (see for example \cite[Example 4.2]{ren:AP}) that these are the extremal invariant probability measures (one has to add $\mu_0$ and $\mu_1$ which we have excluded from our discussion).

\end{ex}

We use now the construction given by Feldman and Moore in \cite{fm:relations}: since $(X,R,\mu)$ is a countable standard measured equivalence relation, one can construct its von Neumann algebra ${\mathcal M}=W^*(X,R,\mu)$ and its state $\varphi=\mu\circ P$, where $P$ is the expectation of ${\mathcal M}$ onto ${\mathcal A}=L^\infty(X,\mu)$, which is normal and faithful. By construction, ${\mathcal M}$ acts on the Hilbert space $L^2(R,s^*\mu)$. This representation is standard. It is known that the modular operator $\Delta$ of $\varphi$ is the operator of multiplication by $D_\mu$ and that the modular automorphism $\sigma_t^\varphi$ is implemented by the operator of multiplication by $D_\mu^{it}$.  A. Connes has given the following characterization of the pairs $({\mathcal M},\varphi)$ arising from this construction.

\begin{thm}\label{Markov states} \cite[Theorem 1]{ac:krieger}  Let $\varphi$ be a faithful normal state on a von Neumann algebra ${\mathcal M}$. Then \tfae
\begin{enumerate}
\item there exists an increasing sequence $(M_n)$ of finite dimensional subalgebras  stable under the automorphism group $\sigma$ of $\varphi$ whose union is weakly dense in ${\mathcal M}$;
\item there exists a Bratteli diagram $(V,E)$ and a random walk $(p,\nu_0)$ on it such that the pair $({\mathcal M},\varphi)$ is isomorphic to $(W^*(X,R,\mu), \mu\circ P)$, where $R$ is the tail equivalence relation on the infinite path space $X$ of the diagram and $\mu$ is the Markov measure of the random walk.
\end{enumerate}
\end{thm}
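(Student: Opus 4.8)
The plan is to prove the two implications separately, leaning on \thmref{fd expectation} as the finite-dimensional building block for $(1)\Rightarrow(2)$ and on the quasi-product description of $D_\mu$ in \propref{D-measure} for $(2)\Rightarrow(1)$. For the easy implication $(2)\Rightarrow(1)$, assume $(\mathcal M,\varphi)=(W^*(X,R,\mu),\mu\circ P)$ arises from a random walk $(p,\nu_0)$ on $(V,E)$. For each $n$ let $R_n=\bigcup_{(a,b)}Z(a,b)$, the union over equivalent finite paths $(a,b)$ of length $n$, be the finite subrelation of $R$ determined by the first $n$ levels, and set $M_n=W^*(X,R_n,\mu)$, a finite-dimensional subalgebra (a sum of matrix blocks indexed by $V(n)$). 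These increase with $n$, and their union is weakly dense because $R=\bigcup_n R_n$. Modular invariance is where the Markov structure enters: as recalled just before the theorem, $\sigma^\varphi_t$ is implemented by multiplication by $D_\mu^{it}$, and \propref{D-measure} identifies $D_\mu$ as the quasi-product cocycle of a potential $q$, hence it is locally constant on each cylinder $Z(a,b)$. Therefore multiplication by $D_\mu^{it}$ preserves the support condition defining $M_n$, so each $M_n$ is $\sigma$-stable and $(1)$ holds.

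For $(1)\Rightarrow(2)$ the entry point is Takesaki's theorem: a von Neumann subalgebra carries a $\varphi$-preserving normal conditional expectation exactly when it is globally $\sigma^\varphi$-invariant. Applied to each $M_n$ this produces faithful normal $\varphi$-preserving expectations $E_n:\mathcal M\to M_n$ forming a tower, $E_{n-1}=E_{n-1}\circ E_n$, whose restrictions $Q_n:=E_{n-1}|_{M_n}:M_n\to M_{n-1}$ are faithful conditional expectations between finite-dimensional C*-algebras. This reduces the construction of the diagram to the finite-level data furnished by \thmref{fd expectation}.

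I would then run \thmref{fd expectation} inductively up the tower. Fix a Cartan subalgebra $A_0\subset M_0$; given $A_{n-1}\subset M_{n-1}$, part (i) supplies a Cartan subalgebra $A_n\subset M_n$ making $(M_{n-1},A_{n-1})\subset(M_n,A_n)$ a Cartan pairs inclusion compatible with $Q_n$, and part (ii) realizes $Q_n$ as the model expectation attached to the graph $(V(n-1),E(n),V(n))$ and the transition probability $p_n$ on $E(n)$ of \lemref{transition probability}. Concatenating the graphs yields a Bratteli diagram $(V,E)$ with $V=\coprod_n V(n)$, $E=\coprod_n E(n)$ and transition probability $p=(p_n)$; the initial measure $\nu_0$ is read off from $\varphi$ on the centre $Z(M_0)$, and faithfulness of $\varphi$ gives full support of both $p$ and $\nu_0$.

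Finally I would pass to the limit and match the result with the Markov construction. Let $\mathcal A$ be the weak closure of $\bigcup_n A_n$ and $P$ the induced expectation $\mathcal M\to\mathcal A$; by Feldman--Moore \cite{fm:relations} the Cartan pair $(\mathcal M,\mathcal A)$ gives $\mathcal M\cong W^*(X,R,\mu)$ with $X$ the spectrum of $\mathcal A$ and $\mu$ the measure induced by $\varphi|_{\mathcal A}$. The nested finite Cartan pairs identify $X$ with the inverse limit of the finite spectra, that is with the infinite path space of $(V,E)$, and $R$ with the tail equivalence relation $\bigcup_n R_n$; the compatibility clause (iii) of \lemref{key lemma}, propagated through the tower, recursively computes $\mu$ on cylinders as $\mu(Z(a))=\nu_0(s(a))p(a)$, so $\mu$ is the Markov measure of $(p,\nu_0)$ and $\varphi=\mu\circ P$. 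The main obstacle is precisely this passage to the inductive limit: checking that $\overline{\bigcup_n A_n}$ is not merely abelian but \emph{maximal} abelian and regular in $\mathcal M$, that its spectrum is the full path space, and --- most delicately --- that the locally defined $p_n$ glue into a single measure agreeing with $\varphi$. The coherence built into the compatibility clause of \thmref{fd expectation} is exactly what makes this gluing succeed.
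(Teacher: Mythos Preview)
Your proof is correct and follows the paper's strategy closely: both directions use the same ingredients, and the inductive application of \thmref{fd expectation} to the tower of expectations $Q_n:M_n\to M_{n-1}$ is exactly the paper's argument for $(i)\Rightarrow(ii)$. Two differences are worth noting. First, the paper normalizes by assuming $M_0=\C 1$, so $A_0=M_0$ and $\nu_0$ is a point mass; this costs nothing and slightly simplifies the bookkeeping. Second, and more substantively, the paper handles the passage to the limit differently from what you propose. You want to take $\mathcal A=\overline{\bigcup_n A_n}$, invoke Feldman--Moore for the pair $(\mathcal M,\mathcal A)$, and then identify the resulting $(X,R,\mu)$ with the path-space model; as you yourself flag, this requires verifying that $\mathcal A$ is maximal abelian and regular in $\mathcal M$, which is extra work. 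The paper sidesteps this entirely: the coherent isomorphisms $\Phi_n:M_n\to C^*(R_n)$ assemble into a $*$-isomorphism $\Phi_\infty:M_\infty\to C_{00}(R)$ of the algebraic inductive limits that intertwines $\varphi|_{M_\infty}$ with $(\mu\circ P)|_{C_{00}(R)}$; since $\mathcal M$ and $W^*(X,R,\mu)$ are the GNS completions of these dense $*$-algebras with respect to these states, $\Phi_\infty$ extends automatically to the desired normal isomorphism. This GNS argument is cleaner and delivers the Cartan property of $\mathcal A$ as a consequence rather than a hypothesis.
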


The original theorem of Connes is in terms of Krieger's factors. It is an intermediate step to show that all hyperfinite type III$_0$ factors are Krieger's factors. We consider here von Neumann algebras arising from hyperfinite measured equivalence relations rather than Krieger's factors. This makes the statement easier to prove.
\vskip5mm
\begin{proof}{\it (ii)$\Rightarrow$(i)}. We assume that ${\mathcal M}=W^*(X,R,\mu)$ as above.
We let $M_n$ be the subalgebra of ${\mathcal M}$ generated by the characteristic functions ${\bf 1}_{Z(a,b)}$, where $(a,b)$ is a pair of joining paths of length $n$. Since the Radon-Nikodym derivative $D_\mu$ is constant on the cylinder sets $Z(a,b)$, $A_n$ is stable under the automorphism group of $\varphi_\mu$. Since $Z(a,b)$ is the disjoint union of $Z(ae,be)$'s where $e\in E(n+1)$ and $s(e)=r(a)=r(b)$, we have the inclusion $M_n\subset M_{n+1}$. The elements of the union $M_\infty$ of the $M_n$'s are the locally constant functions with compact support. Since $M_\infty$ is dense in $C_c(R)$ with respect to the inductive limit topology, it is dense in the weak topology. Since $C_c(R)$ is weakly dense in ${\mathcal M}$, so is $M_\infty$.
\vskip5mm
{\it (i)$\Rightarrow$(ii)}. Let $(M_n)_{n\in\N}$ be as in (i). Without loss of generality, we may assume that $M_0=\C 1$. Since for all $n$,  $M_n$ is stable under $\sigma$, the modular automorphism of the restriction $\varphi_n$ to $M_n$ is the restriction $\sigma_n$ of $\sigma$ to $M_n$. Since for all $n\ge 1$, $M_{n-1}$ is invariant under $\sigma_n$,   there exists a faithful expectation $Q_{n-1,n}: M_n\to M_{n-1}$ such that $\varphi_n=\varphi_{n-1}\circ Q_{n-1,n}$. We use inductively  \thmref{fd expectation}, to construct an increasing sequence $(A_n)$ of abelian subalgebras such that for all $n\ge 1$,  $(M_{n-1},A_{n-1})\subset (M_n,A_n)$ is a Cartan pair inclusion compatible with the conditional expectation $Q_{n-1,n}$. The construction is initialized by the only possible choice $A_0=M_0$. Thus we obtain for each $n\in\N$ the spectrum $X_n$ of $A_n$ and for each $n\ge 1$ the graph $(V(n-1), E(n), V(n))$ of the inclusion $M_{n-1}\subset M_n$ and the transition probability $p_n:E(n)\to\R_+^*$. From the same theorem, we obtain for all $n$ an isomorphism $\Phi_n: M_n\to C^*(R_n)$ sending $A_n$ to $C(X_n)$, where $(X_n,R_n)$ is the equivalence relation defined by $(M_n,A_n)$, such that $\Phi_n$ extends $\Phi_{n-1}$ and carrying the conditional expectation $Q_{n-1,n}:M_n\to M_{n-1}$ into the model conditional expectation $F_{p_n}: C^*(R_n)\to C^*(R_{n-1})$. Again, the construction is initialized by the only possible isomorphism $\Phi_0:M_0\to C(X_0)$. Since the conditional expectations $P_n: M_n\to A_n$ satisfy $Q_{n-1,n}\circ P_n=P_{n-1}\circ Q_{n-1,n}$, we have $\varphi_n=\nu_n\circ P_n$, where $\nu_n$ is the restriction of $\varphi_n$ to $A_n$. The Bratteli diagram of the increasing sequence $(M_n)$ of finite dimensional algebras is $(V=\coprod_{n=0}^\infty V(n), E=\coprod_{n=1}^\infty E(n))$. The transition probabilities $p_n:E(n)\to\R_+^*$ define a transition probability on $E$. The initial measure $\nu_0$ is the point mass at the unique point of $X_0$. This defines the random walk. We let $X$ be its infinite path space, $R$ be the tail equivalence and $\mu$ be the Markov measure of the random walk. The isomorphisms $\Phi_n:M_n\to C^*(R_n)$ extend to an isomorphism $\Phi_\infty: M_\infty\to C_{00}(R)$, where $M_\infty$ is the union of the $M_n$'s and $C_{00}(R)$ is the $*$-algebra of locally constant functions with compact support. This isomorphism carries the restriction of the state $\varphi$ to the restriction of the state $\mu\circ P$, where $P$ is the expectation of  $W^*(R)$ onto $L^\infty(X,\mu)$. Since both von Neumann algebras ${\mathcal M}$ and $W^*(X,R,\mu)$ can be obtained from the GNS representation of these states, $\Phi_\infty$ extends to a normal $*$-isomorphism $\Phi: {\mathcal M}\to W^*(X,R,\mu)$ which sends the weak closure $\mathcal A$ of the union of the $A_n$'s to $L^\infty (X,\mu)$ and $\varphi$ to $\mu\circ P$.

\end{proof}

\begin{rem} The von Neumann algebra  ${\mathcal M}$ in the theorem is a factor if and only if the Markov measure $\mu$ is ergodic under the tail equivalence relation. Its flow of weights can be computed from the Radon-Nikodym derivative $D_\mu$, or more concretely, from the cotransition probability $q$. For example, the above transition probability $p_t$ on the Pascal triangle gives the hyperfinite II$_1$ factor. We shall return to the probabilistic identification of the flow of weights in the last section.
\end{rem}

\begin{rem} States on AF-algebras constructed from a random walk on a Bratteli diagram are called quasi-product states in \cite{eva:quasiproduct}. Do we have a characterization (besides condition (i) of the theorem) of the normal faithful states on a hyperfinite von Neumann algebra which can be described as quasi-product states? Necessarily, theses states are almost periodic (their modular operators are diagonalizable) and their centralizers contain a Cartan subalgebra. In part II of \cite{ac:krieger}, A. Connes shows that any faithful semifinite normal weight on a hyperfinite factor of type $III_0$ whose modular operator $\Delta$ is diagonalizable and such that $1$ is isolated in its spectrum and its point spectrum contained in $\Q$ satisfies condition (i) of the theorem (with $M_n$ type $I_\infty$ rather than finite-dimensional).
\end{rem}

\section{Markov chains and Bratteli diagrams}

In order to recover the general theory of time-dependent Markov chains (with discrete time), it is necessary to generalize the notion of Bratteli diagram. Indeed, the original definition is limited to Markov chains with at most countably many states. Generalized Bratteli diagrams have been considered before, mostly in the topological setting, and are part of the theory of topological graphs. Since we are considering objects of measure-theoretical nature, we choose the Borel setting. We assume implicitly that the Borel spaces are analytic.

\begin{defn} We say that a directed graph $(V,E)$ is a {\it Borel graph} if the sets of edges $E$ and the set of vertices $V$ are endowed with a Borel structure and the source and range maps are Borel. A {\it Borel Bratteli diagram} is a Bratteli diagram which is a Borel graph.
\end{defn}

Before extending \defnref{transition} to Borel Bratteli diagrams , we need to make precise our assumptions.

\begin{defn} Let $E,V$ be Borel spaces and let $s:E\to V$ be a Borel surjection. A Borel $s$-system of probability measures $p$ is a map assigning to each $v\in V$ a probability measure $p_v$ on $s^{-1}(v)$ and such that for all bounded Borel function $f$ on $E$, the map $v\to \int f dp_v$ is Borel.
\end{defn}

Given a Borel $s$-system $p$ and a probability measure $\nu$ on $V$, we can form the probability measure $\mu=\nu p$ on $E$, defined by $\int f d(\nu p)=\int (\int f dp_v)d\nu(v)$
for $f$ bounded Borel function on $E$. The measure $\nu$ is the image $s_*(\nu p)$ of $\nu p$ and $\mu=\nu p$ is the disintegration of $\mu$ along $s$. This construction requires in fact weaker assumptions on $p$:  it suffices to have $p_v$ defined for a.e. $v$ and the $\nu$-measurability of the function $v\to \int f dp_v$ for all bounded Borel function $f$ on $E$. However, the disintegration theorem of measures, as stated for example in \cite[Theorem 2.1]{hah:haar}, says that, conversely, given a probability measure $\mu$ on $E$, there exists a Borel $s$-system of probability measures $p$, where $\nu=s_*(\mu)$, such that $\mu=\nu p$, where $\nu=s_*(\mu)$. It is unique in the sense that if $\nu p=\nu p'$, then $p_v=p'_v$ for $\nu$-a.e.$v$. With an abuse of language, we shall say that $(\nu,p)$ is the disintegration of $\mu$ along $s$.

\begin{notation} Consider the $n$-th floor $V(n-1)\xleftarrow{s} E(n)\xrightarrow{r} V(n)$ of a Borel Bratteli diagram. A probability measure $\mu_n$ on $E(n)$ admits a disintegration $\mu_n=\nu_{n-1}p_n$ along $s$ and a disintegration $\mu_n=\nu_nq_n$ along $r$, where $\nu_{n-1}=s_*\mu_n$ and $\nu_n=r_*\mu_n$. This establishes a bijection between pairs $(\nu_{n-1},p_n)$, where $\nu_{n-1}$ is a probability measure on $V(n-1)$ and $p_n$ is a Borel system of probability measures along $s$ and pairs $(\nu_{n},q_n)$,where $\nu_n$ is a probability measure on $V(n)$ and $q_n$ is a Borel system of probability measures along $r$ , given by the equation $\nu_{n-1}p_n=\nu_nq_n$. 
\end{notation}

\begin{defn}\label{random walk} Let $(V,E)$  be a Borel Bratteli diagram.
\begin{itemize}
\item A {\it transition probability} $p$ is a Borel system of probability measures for the source map $s: E\to V$. It will be usually viewed as a sequence $p=(p_n)$ of  Borel systems of probability measures for $s: E(n)\to V(n-1)$.
\item A {\it cotransition probability} $q$ is a Borel system of probability measures for the range map $r: E\to V$. It will be usually viewed as a sequence $q=(q_n)$ of  Borel systems of probability measures for $r: E(n)\to V(n)$.
\item A {\it random walk} on $(V,E)$ is a sequence of probability measures $\mu_n$ on $E(n)$ which are compatible in the sense that for all $n\ge 1$, $r_*\mu_n=s_*\mu_{n+1}$. 
\item The measures $\nu_n=r_*\mu_n$ are called the {\it one-dimensional distributions} of the random walk. 
\item The measure $\nu_0=s_*\mu_1$ is called the {\it initial distribution} of the random walk. 
\end{itemize} 
\end{defn}

Let $(\mu_n)$ be a random walk on the Bratteli diagram $(V,E)$. The disintegration of $\mu_n$ along $s$ and $r$  gives respectively a pair $(\nu_{n-1},p_n)$ and a pair $(\nu_n,q_n)$ as above. Then $p=(p_n)$ [resp. $q=(q_n)$] is called the transition [cotransition] probability of the random walk.  The measures $\nu_n$ are called the one-dimensional distributions of the random walk. They satisfy the relations $\nu_{n-1}=s_*(\nu_nq_n)$ and $\nu_n=r_*(\nu_{n-1}p_n)$ for all $n\ge$. We say that the sequence $(\nu_n)$ is $q$-{\it compatible} and $p$-{\it compatible} respectively.

\begin{prop} Let $(V,E)$ be a Borel Bratteli diagram $(V,E)$.
\begin{enumerate}
\item Given a transition probability $p$ and a probability measure $\nu_0$ on $V(0)$, there exists a unique random walk on $(V,E)$ admitting $p$ as its transition probability and $\nu_0$ as its initial distribution.
\item Given a cotransition probability $q$ and  a sequence of probability measures $\nu_n$ on $V(n)$ such that $\nu_{n-1}=s_*(\nu_nq_n)$ for all $n\ge 1$, there exists a unique random walk on $(V,E)$ admitting $q$ as its cotransition probability and $(\nu_n)$ as its one-dimensional distribution.
\end{enumerate}
\end{prop}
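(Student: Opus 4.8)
The plan is to construct the random walk one floor at a time, the only tools being the construction $\mu=\nu p$ recalled above and the uniqueness clause of the disintegration theorem \cite[Theorem 2.1]{hah:haar}. The structural remark underlying both parts is that a sequence $(\mu_n)$ of probability measures on the floors $E(n)$ is a random walk precisely when, for each $n\ge 1$, the source-marginal $s_*\mu_{n+1}$ equals the range-marginal $r_*\mu_n=\nu_n$; so prescribing $\mu_n$ fixes the marginal $\nu_n$ on which the next floor must be erected, and the problem reduces to repeatedly applying existence and uniqueness of disintegrations.

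For (i) I would set $\mu_1=\nu_0 p_1$ and then proceed by induction: having built $\mu_n$, put $\nu_n=r_*\mu_n$ and define $\mu_{n+1}=\nu_n p_{n+1}$. Each $\nu_n$ is a Borel probability measure on $V(n)$ and each $\mu_n$ a Borel probability measure on $E(n)$, so the sequence is well defined. By construction $s_*\mu_{n+1}=\nu_n=r_*\mu_n$, so $(\mu_n)$ is a random walk; its initial distribution is $s_*\mu_1=\nu_0$, and since $\mu_n=\nu_{n-1}p_n$ is the disintegration of $\mu_n$ along $s$, its transition probability is $p$. For uniqueness, let $(\mu_n')$ be any random walk with these data. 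Its initial distribution forces $s_*\mu_1'=\nu_0$, and its transition probability being $p$ means the disintegration of $\mu_1'$ along $s$ is $p_1$; uniqueness of disintegration gives $\mu_1'=\nu_0 p_1=\mu_1$. Inductively, compatibility yields $s_*\mu_{n+1}'=r_*\mu_n'=\nu_n$, and uniqueness of disintegration again gives $\mu_{n+1}'=\nu_n p_{n+1}=\mu_{n+1}$.

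For (ii) the construction is even more direct, since all one-dimensional distributions are prescribed in advance: I would simply put $\mu_n=\nu_n q_n$, the disintegration along $r$ with range-marginal $\nu_n$ and conditionals $q_n$. Then $r_*\mu_n=\nu_n$ by construction, while the hypothesis $\nu_{n-1}=s_*(\nu_n q_n)$ gives $s_*\mu_n=\nu_{n-1}=r_*\mu_{n-1}$, so the compatibility relation holds and $(\mu_n)$ is a random walk with one-dimensional distributions $(\nu_n)$ and cotransition probability $q$. Uniqueness is immediate: any random walk with these data satisfies $r_*\mu_n=\nu_n$ and has disintegration $q_n$ along $r$, whence $\mu_n=\nu_n q_n$ by uniqueness of disintegration.

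I expect no deep obstacle, since the statement is essentially a repackaging of the existence and uniqueness of disintegrations along $s$ and $r$. The two points requiring care are the Borel measurability of the inductively produced marginals $\nu_n$ in (i), which holds because the pushforward of a Borel probability measure under the Borel map $r$ is again Borel, and the almost-everywhere nature of disintegration uniqueness: the phrase \emph{admitting $p$} (resp. \emph{$q$}) must be read as agreement of conditionals up to $\nu_{n-1}$- (resp. $\nu_n$-) almost everywhere, under which the measures $\mu_n$ are nonetheless pinned down uniquely.
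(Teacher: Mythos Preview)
Your proof is correct and follows exactly the same approach as the paper: the paper's proof reads in its entirety ``This is clear. In the first case, we define inductively $\mu_n=\nu_{n-1}p_n$. In the second case, we define $\mu_n=\nu_n q_n$.'' You have simply spelled out the induction, the verification of compatibility, and the uniqueness argument (via uniqueness of disintegration) that the paper leaves implicit.
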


\begin{proof} 
This is clear. In the first case, we define inductively $\mu_n=\nu_{n-1}p_n$. In the second case, we define $\mu_n=\nu_n q_n$.
\end{proof}

\begin{cor} Given a Borel Bratteli diagram $(V,E)$, there is a one-to-one correspondence between
\begin{enumerate}
\item pairs $(p,\nu_0)$, where $p$ is a transition probability and $\nu_0$ is a probability measure on $V(0)$;
\item pairs $(q, (\nu_n))$, where $q$ is a cotransition probability and $\nu_n$ is a $q$-compatible sequence of probability measures on $V(n)$
\end{enumerate}
given by the relation $\nu_{n-1}p_n=\nu_nq_n$.
\end{cor}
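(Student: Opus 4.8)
The plan is to use the random walk $(\mu_n)$ itself as the pivot object mediating the correspondence. The key observation is that each edge measure $\mu_n$ on $E(n)$ can be disintegrated in two ways: along $s$, which produces the pair $(\nu_{n-1},p_n)$, and along $r$, which produces the pair $(\nu_n,q_n)$. Thus a single random walk simultaneously encodes both the transition data and the cotransition data, and the corollary is obtained by showing that each of the two kinds of data is in bijection with random walks.

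First I would record that the preceding proposition already furnishes these two bijections. For the passage from random walks to pairs of type (i): given a random walk $(\mu_n)$, disintegrate each $\mu_n$ along $s$; by the uniqueness clause of the disintegration theorem \cite[Theorem 2.1]{hah:haar} this yields a well-defined pair $(\nu_{n-1},p_n)$, and setting $\nu_0=s_*\mu_1$ gives a transition probability $p=(p_n)$ together with an initial distribution. That this assignment is onto and one-to-one is exactly the existence and uniqueness asserted in part (i) of the proposition. Symmetrically, disintegrating each $\mu_n$ along $r$ produces $(\nu_n,q_n)$ with $\nu_n=r_*\mu_n$; the compatibility $\nu_{n-1}=s_*(\nu_nq_n)=s_*\mu_n$ holds automatically, so one recovers a cotransition probability $q=(q_n)$ together with a $q$-compatible sequence $(\nu_n)$, and surjectivity and injectivity are part (ii) of the proposition.

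Composing these two bijections gives the desired one-to-one correspondence between pairs $(p,\nu_0)$ and pairs $(q,(\nu_n))$. To identify it with the relation in the statement, I would note that for the random walk attached to matched data one has both $\mu_n=\nu_{n-1}p_n$ and $\mu_n=\nu_nq_n$, whence $\nu_{n-1}p_n=\nu_nq_n$, which is precisely the asserted matching condition; conversely this relation determines the common measure $\mu_n$ and hence the random walk.

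The only point requiring care — the main obstacle, such as it is — is checking consistency of the one-dimensional distributions across levels: the measure $\nu_n$ obtained by disintegrating $\mu_n$ along $r$ must coincide with the measure obtained by disintegrating $\mu_{n+1}$ along $s$, so that a single coherent sequence $(\nu_n)$ is attached to a given random walk. This is exactly the compatibility condition $r_*\mu_n=s_*\mu_{n+1}$ built into the definition of a random walk. Once this is in place, the remainder is routine bookkeeping resting on the uniqueness in the disintegration theorem and on the preceding proposition.
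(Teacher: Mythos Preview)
Your proposal is correct and follows exactly the route the paper intends: the corollary is stated without proof, as an immediate consequence of the preceding proposition, and your argument spells out precisely that deduction---using the random walk $(\mu_n)$ as the pivot, invoking parts (i) and (ii) of the proposition for the two bijections, and reading off the relation $\nu_{n-1}p_n=\nu_nq_n$ from the two disintegrations of $\mu_n$. There is nothing to add.
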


From now on, a random walk on $(V,E)$ will designate indifferently the measures $\mu_n$ on $E(n)$ as in \defnref{random walk}, the pair $(p,\nu_0)$ or the pair $(q, (\nu_n))$ as in the above corollary. We recall the construction of the Markov measure of a random walk (see \cite[V-1]{nev:proba}). As earlier, we introduce the infinite path space $X$. We let $X^{n}=E(1)*\ldots*E(n)$ denote the space of paths $e_1\ldots e_n$ of length $n$ endowed with the product Borel structure. Then $X=\projlim X^{n}$ is the projective limit with respect to the canonical projection $X^{n}\leftarrow X^{n+1}$. Given a random walk $(p,\nu_0)$, one first construct by induction a probability measure $\mu^{n}$ on $X^{n}$ such that
$$\int fd\mu^1=\int f(e_1)dp_v(e_1)d\nu_0(v),\quad\int f d\mu^{n}=\int f(e_1\ldots e_n)dp_{r(e_{n-1})}(e_n)d\mu^{n-1}(e_1\ldots e_{n-1})$$
The sequence of measures $(\mu^n)$ is consistent. Therefore, there exists a unique probability measure $\mu$ on $X$ whose image in $X^n$ is $\mu^n$. Note that the one-dimensional distribution $\nu_n$ on $V(n)$ is the image of $\mu^n$ by the range map $r:X^{n}\to V(n)$. It is also the image of $\mu$ by the map $r_n: X\to V(n)$ such that $r_n(e_1e_2\ldots)=r(e_n)$.

It remains to characterize the Markov measure $\mu$ on $X$ in terms of the cotransition probability $q$. We have seen that in the framework of the previous section, the Markov measure $\mu$ is quasi-invariant under the tail equivalence relation and its Radon-Nikodym derivative $D$ is the quasi-product cocycle defined by $q$. We then say that $\mu$ is a $D$-measure. The notion of quasi-product cocycle does not admit a straightforward generalization in the general framework. However, there exists (see \cite[Proposition 3.7]{ren:AP}) an equivalent definition of a $D$-measure (known in statistical mechanics as the Dobrushin-Lanford-Ruelle condition for Gibbs states) which can be easily extended. We let $X_{|n}$ be the space of infinite paths $e_{n+1}e_{n+2}\ldots$ starting at level $n$. The sequence  of quotient maps
$$X\xrightarrow{\pi_1} X_{|1}\xrightarrow{\pi_2} X_{|2}\xrightarrow{\pi_3}\ldots\xrightarrow{\pi_n} X_{|n}\xrightarrow{\pi_{|n+1}}\ldots$$
defines the tail equivalence relation $R$ on $X$: two infinite paths $x$ and $y$ are tail equivalent if and only if there exist $n$ such that $\pi_n\circ\ldots\pi_2\circ\pi_1(x)= \pi_n\circ\ldots\pi_2\circ\pi_1(y)$. A cotransition probability $q$ defines an inductive system of expectations
$$B(X)\xrightarrow{\tilde q_1} B(X_{|1})\xrightarrow{\tilde q_2}\ldots\xrightarrow{\tilde q_n} B(X_{|n})\xrightarrow{\tilde q_{n+1}}$$
where $B(Y)$ is the space of bounded complex-valued Borel functions on $Y$ and
$$\tilde q_n(f)(e_{n+1}e_{n+2}\ldots)=\int f(e_ne_{n+1}e_{n+2}\ldots)dq_n^{s(e_{n+1})}(e_n).$$

\begin{defn} Let $q$ be a cotransition probability on the Borel Bratteli diagram $(V,E)$.
A $q$-{\it measure} is a measure on the infinite path space $X$ which factors through all expectations $\tilde q_n\ldots \tilde q_2\tilde q_1$.
\end{defn}

Then we have the easy generalisation of \propref{D-measure}:

\begin{thm}\label{q-measure} Let $\mu$ be a probability measure on the infinite path space of a Borel Bratteli diagram $(V,E)$. Then \tfae
\begin{enumerate}
\item $\mu$ is a $q$-measure;
\item $\mu$ is the Markov measure of a random walk admitting $q$ as its cotransition probability.
\end{enumerate}
\end{thm}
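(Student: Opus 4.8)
The plan is to read both conditions through the finite-dimensional distributions $\mu^n$ of $\mu$ on $X^n=E(1)*\ldots*E(n)$ and to exploit the forward--backward duality $\nu_{n-1}p_n=\nu_nq_n$ built into the disintegration of a random walk. The technical heart is a reversal lemma, which I would establish first: if $\mu$ is the Markov measure of a random walk with one-dimensional distributions $(\nu_n)$ and cotransition probability $q$, then the image $\mu^n$ of $\mu$ on $X^n$ disintegrates along the last-range map $r:X^n\to V(n)$, $e_1\ldots e_n\mapsto r(e_n)$, as $\mu^n=\nu_n\hat q_n$, where $\hat q_n^{\,w}$ denotes the backward path measure
$$\hat q_n^{\,w}=dq_n^{w}(e_n)\,dq_{n-1}^{s(e_n)}(e_{n-1})\cdots dq_1^{s(e_2)}(e_1)$$
on the paths ending at $w\in V(n)$. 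I would argue by induction on $n$, the case $n=1$ being the identity $\mu^1=\mu_1=\nu_1q_1$. For the step, the forward recursion $d\mu^n=dp_{r(e_{n-1})}(e_n)\,d\mu^{n-1}$ shows that conditionally on $e_1\ldots e_{n-1}$ the last edge $e_n$ is distributed as $p_{r(e_{n-1})}$, hence depends on the past only through the vertex $r(e_{n-1})=s(e_n)$; disintegrating $\mu^{n-1}$ along $r$ by the inductive hypothesis, keeping the coupled vertex variable $v=r(e_{n-1})=s(e_n)$, and finally replacing $\mu_n=\nu_{n-1}p_n$ by $\nu_nq_n$ converts the $p$-description into the asserted $q$-description. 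This is the one place where the duality $\nu_{n-1}p_n=\nu_nq_n$ is used, and I expect the bookkeeping of the coupled vertex variable to be the only delicate point.

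Next I would record that the factorization in the definition of a $q$-measure is automatically through the image measure $\mu_{|n}$ of $\mu$ on $X_{|n}$: since each $q_k^{v}$ is a probability measure, $\tilde q_n\cdots\tilde q_1$ returns unchanged any function pulled back from $X_{|n}$, so any measure witnessing the factorization must equal $\mu_{|n}$. Thus being a $q$-measure means exactly $\int_X f\,d\mu=\int_{X_{|n}}(\tilde q_n\cdots\tilde q_1 f)\,d\mu_{|n}$ for every bounded Borel $f$ and every $n$. For (ii)$\Rightarrow$(i) I would verify this identity for cylinders $f=g(e_1\ldots e_N)$ with $N\ge n$ and extend by the functional monotone class theorem, both sides being continuous under bounded monotone limits. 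For such $f$ the left side is $\int g\,d\mu^N$, which the reversal lemma writes as the full backward integral against $\nu_N\hat q_N$; on the right, $\tilde q_n\cdots\tilde q_1 f$ is the partial backward integral over $e_1,\ldots,e_n$, a cylinder function of $e_{n+1}\ldots e_N$, and integrating it against $\mu_{|n}$ uses only the marginal of $\mu^N$ on $(e_{n+1},\ldots,e_N)$, which by the lemma is the remaining backward integral, the omitted $q_1,\ldots,q_n$ factors integrating to $1$. The two computations yield the same iterated integral.

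For (i)$\Rightarrow$(ii) I would set $\nu_n=(r_n)_*\mu$ and reconstruct the random walk from the pair $(q,(\nu_n))$. First I would check the $q$-compatibility $\nu_{n-1}=s_*(\nu_nq_n)$ by comparing the factorization at levels $n-1$ and $n$ applied to $f=h(r(e_{n-1}))$: the level-$(n-1)$ value is $\int h\,d\nu_{n-1}$, while one further application of $\tilde q_n$ turns the level-$n$ value into $\int h\,d(s_*(\nu_nq_n))$, and letting $h$ run over bounded Borel functions on $V(n-1)$ forces the two measures to agree. The correspondence between pairs $(p,\nu_0)$ and pairs $(q,(\nu_n))$ established above then produces the random walk with cotransition $q$ and one-dimensional distributions $(\nu_n)$, together with its Markov measure $\mu'$. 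Finally I would match finite-dimensional distributions: applying the factorization at level $n$ to a cylinder $f=g(e_1\ldots e_n)$, whose image $\tilde q_n\cdots\tilde q_1 f$ depends on the tail only through $s(e_{n+1})=r(e_n)$ with law $\nu_n$, shows that the image of $\mu$ on $X^n$ equals $\nu_n\hat q_n$, which by the reversal lemma is precisely the image of $\mu'$. Since both $\mu$ and $\mu'$ are the unique projective limits of their consistent finite-dimensional marginals, $\mu=\mu'$, completing the argument.
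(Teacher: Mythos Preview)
Your proposal is correct and follows essentially the same route as the paper: both arguments work through the finite-dimensional marginals $\mu^n$, identify them with the backward disintegration $\nu_n(q_n\circ\cdots\circ q_1)$ (your ``reversal lemma''), and for (i)$\Rightarrow$(ii) set $\nu_n=(r_n)_*\mu$, verify $q$-compatibility, and match marginals with the reconstructed Markov measure. Your treatment is more explicit than the paper's---you isolate the reversal lemma and the monotone-class extension where the paper simply asserts ``this is clear from the construction''---but the underlying mechanism is identical.
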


\begin{proof} Let $\mu$ be a Markov measure with transition probability $p$, cotransition probability $q$ and one-dimensional distribution $(\nu_n)$. Let us show that $\mu$ factors through $\tilde q_m\ldots \tilde q_2\tilde q_1$ for all $m$. The measure $\mu_{|m}$ on $X_{|m}$, image of $\mu$ by $\pi_m\circ\ldots\pi_2\circ\pi_1$, is the Markov measure defined by the initial measure $\nu_m$ and the transition probability $(p_n), n>m$. Since measures on $X$ are uniquely determined by their values on cylinder sets, it suffices to show that for $n> m$,
$\mu^n=\mu_{|m}^n\tilde q_m\ldots \tilde q_2\tilde q_1$
where $\mu^n$ [resp. $\mu_{|m}^n$] is the measure of the random walk on $X^n=E(1)*\ldots *E(n)$ [resp. $X^n_{|m}=E(m+1)*\ldots *E(n)$]. But this is clear from the construction of the Markov measure.\\
Let $\mu$ be a $q$-measure. We define for all $n$ the measure $\nu_n$ as the image of $\mu$ by the map $r_n: X\to V(n)$ such that $r_n(e_1e_2\ldots )=r(e_n)$. Because of the relation $r_n=s_n\circ\pi_n\circ\ldots\pi_2\circ\pi_1$, it is also the image of $\mu_{|n}$ by the map $s_n: X_{|n}\to V(n)$ such that $s_n(e_{n+1}e_{n+2}\ldots)=s(e_{n+1})$. Since $\mu_{|n-1}=\mu_n\circ \tilde q_n$, $\nu_{|n-1}=s_*(\nu_nq_n)$: the sequence $(\nu_n)$ is $q$-compatible. Therefore it is the one-dimensional distribution of a Markov chain with cotransition probability $q$. The disintegration $\mu=\mu_{|n}(\tilde q_n\circ\ldots\circ\tilde q_1)$ gives the disintegration $(\pi^n)_*\mu=\nu_n(q_n\circ\ldots\circ q_1)$ where $\pi^n: X\to X^n$ is the projection $\pi^n(e_1e_e\ldots)=e_1\ldots e_n$. Therefore $(\pi^n)_*\mu$ agrees with the measure $\mu^n$ of the random walk. This suffices to conclude that $\mu$ is the Markov measure of the random walk.
\end{proof}

\begin{defn} Given a random walk with transition probability $p$ and one-dimensional distributions $(\nu_n)$ on a Bratteli diagram $(V,E)$, a {\it bounded harmonic sequence} is a sequence $(h_n)$ where $h_n$ belongs to $L^\infty(V(n),\nu_n)$, $h_{n-1}=p_n(h_n\circ r)$ and $\sup_n\|h_n\|_\infty <\infty$.
\end{defn}

\begin{notation} The bounded harmonic sequences, equipped with the norm $\sup_n\|h_n\|_\infty$, form a Banach space $H(p,\nu_0)$ which is the projective limit of the sequence
\begin{displaymath}
(M)\qquad L^\infty(V(0),\nu_0)\xleftarrow{p_1} L^\infty(V(1),\nu_1)\xleftarrow{p_2}\ldots\xleftarrow{p_n} L^\infty(V(n),\nu_n)\xleftarrow{p_{n+1}}\ldots
\end{displaymath}
where the maps are the expectations defined by the transition probability $p$. More precisely, with an abuse of notation, we define $p_n(h)=p_n(h\circ r)$ for $h\in L^\infty(V(n),\nu_n)$. We also note that $H(p,\nu_0)$ depends only on the measure class $[\nu_0]$ of $\nu_0$.
\end{notation}

We can now give the ergodic decompostion of a Markov measure $\mu$ under the tail equivalence $R$ on the infinite path space $X$ of a Bratteli diagram. Recall that $R$ is defined by the maps $\pi_n\circ\ldots\pi_2\circ\pi_1: X\to X_{|n}$. We say that a function $f$ on $X$ is invariant if for all $n$, there exists $f_n$ on $X_{|n}$ such that $f=f_n\circ\pi_n\circ\ldots\pi_2\circ\pi_1$. We denote by $L^\infty(X,\mu)^R$ the subalgebra of invariant elements of $L^\infty(X,\mu)$.

\begin{thm}\label{harmonic}\cite[Proposition V-2-2]{nev:proba} Let $\mu$ be the Markov measure of a random walk on a Borel Bratteli diagram $(V,E)$ having a transition probability $p$ and an initial measure $\nu_0$. Then the ordered Banach spaces $L^\infty(X,\mu)^R$ and $H(p,\nu_0)$ are naturally isomorphic.
\end{thm}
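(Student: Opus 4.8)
The plan is to exhibit both spaces as two faces of a single object, the bounded martingale attached to an invariant function, with the Markov property of $\mu$ serving as the bridge between the ``forward'' transition picture (where $H(p,\nu_0)$ lives) and the ``tail'' picture (where $L^\infty(X,\mu)^R$ lives). First I would fix the increasing forward filtration $\mathcal{B}_n=\sigma(e_1,\ldots,e_n)$ on $X$, together with the vertex maps $r_n:X\to V(n)$, $r_n(e_1e_2\ldots)=r(e_n)$, whose law under $\mu$ is $\nu_n$; I would also record that the decreasing tail $\sigma$-algebras $\mathcal{F}_n$ generated by $\pi_n\circ\cdots\circ\pi_1$ satisfy $L^\infty(X,\mu)^R=L^\infty(X,\bigcap_n\mathcal{F}_n,\mu)$, since by definition an invariant $f$ is precisely one that is $\mathcal{F}_n$-measurable for every $n$.

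Next I would define the forward map $\Theta:L^\infty(X,\mu)^R\to H(p,\nu_0)$. For invariant $f$, the conditional expectation $\mathbb{E}_\mu(f\mid\mathcal{B}_n)$ is, by the Markov property, measurable with respect to $\sigma(r_n)$: conditionally on the first $n$ edges the future path $e_{n+1}e_{n+2}\ldots$ depends only on the current vertex $r_n$, and $f$ is a function of that future. Hence $\mathbb{E}_\mu(f\mid\mathcal{B}_n)=h_n\circ r_n$ for a unique $h_n\in L^\infty(V(n),\nu_n)$. The tower property $\mathbb{E}_\mu(\,\cdot\mid\mathcal{B}_{n-1})=\mathbb{E}_\mu(\mathbb{E}_\mu(\,\cdot\mid\mathcal{B}_n)\mid\mathcal{B}_{n-1})$ together with the elementary computation $\mathbb{E}_\mu(h_n\circ r_n\mid\mathcal{B}_{n-1})=\bigl(p_n(h_n\circ r)\bigr)\circ r_{n-1}$ (the average of $h_n$ over the edge $e_n$ leaving $r_{n-1}$ according to $p_n$) yields the harmonicity relation $h_{n-1}=p_n(h_n\circ r)$. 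Since conditional expectation contracts the sup-norm, $\sup_n\|h_n\|_\infty\le\|f\|_\infty$, so $\Theta$ is a well-defined positive contraction.

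The inverse I would build by martingale convergence. Given $(h_n)\in H(p,\nu_0)$, the harmonicity relation says exactly that $M_n:=h_n\circ r_n$ is a bounded $(\mathcal{B}_n)$-martingale, so it converges $\mu$-a.e.\ and in $L^1$ to some $f\in L^\infty$. The essential observation is that the limit is invariant: for each $m$ and each $k>m$ the variable $r_k=r(e_k)$ depends only on the edges past level $m$, so $M_k$ is $\mathcal{F}_m$-measurable; passing to the a.e.\ limit shows $f$ is $\mathcal{F}_m$-measurable for every $m$, i.e.\ $f\in L^\infty(X,\mu)^R$. Setting $\Psi((h_n))=f$, one checks $\Psi\circ\Theta=\mathrm{id}$ from the Lévy convergence $\mathbb{E}_\mu(f\mid\mathcal{B}_n)\to f$, and $\Theta\circ\Psi=\mathrm{id}$ by letting $k\to\infty$ in $M_n=\mathbb{E}_\mu(M_k\mid\mathcal{B}_n)$ to obtain $M_n=\mathbb{E}_\mu(f\mid\mathcal{B}_n)$. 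Finally $|f|=\lim_n|h_n\circ r_n|\le\sup_n\|h_n\|_\infty$ a.e.\ gives $\|f\|_\infty\le\sup_n\|h_n\|_\infty$, so $\Theta$ is isometric, and positivity is preserved in both directions since conditional expectations and a.e.\ limits preserve positivity; thus $\Theta$ is the desired isomorphism of ordered Banach spaces.

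The step I expect to be the main obstacle is the compatibility between the two filtrations: showing on one hand that $\mathbb{E}_\mu(f\mid\mathcal{B}_n)$ descends to a function of the vertex $r_n$ (the Markov, or cotransition, input, where the structure of $\mu$ as a Markov measure --- equivalently as a $q$-measure through the expectations $\tilde q_n$ of the previous section --- is genuinely used), and on the other hand that the forward martingale limit lands back among the tail-invariant functions. Everything else is the routine verification that the conditional-expectation and martingale-limit constructions are mutually inverse, positive and norm-preserving.
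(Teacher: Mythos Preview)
Your argument is the classical martingale proof and it is correct; but it is not the route the paper takes. The paper deliberately avoids martingale convergence and argues instead through the cotransition probability $q$ and the Radon--Nikodym theorem: a positive invariant $f$ gives a finite $q$-measure $\mu'=f\mu$, its pushforwards $\nu'_n=(r_n)_*\mu'$ form a $q$-compatible sequence dominated by $M\nu_n$, and the densities $h_n=d\nu'_n/d\nu_n$ are shown to satisfy the harmonicity relation directly from the identity $\nu_{n-1}p_n=\nu_nq_n$; the inverse rebuilds $\mu'$ from a $q$-compatible sequence $(h_n\nu_n)$ via \thmref{q-measure} and then takes $f=d\mu'/d\mu$. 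The paper itself remarks, just after the proof, that the approach you chose is the one in Neveu and that it has the advantage of producing the explicit formulas $f=\lim_n h_n\circ r_n$ a.e.\ and $h_n=P_{|n}(f_n)$. Conversely, the paper's proof has the virtue of using only disintegration and Radon--Nikodym, and of making transparent the role of the cotransition probability, which is one of the themes it is trying to highlight.
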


\begin{proof} Let $q$ be the cotransition probability of the random walk and $(\nu_n)$ be its one-dimensional distribution. A positive element $f$ of $L^\infty(X,\mu)^R$ defines a finite measure $\mu'=f\mu$ such that $\mu'\le M \mu$ where $M =\|f\|_\infty$. Since $f$ is invariant, $\mu'$ is a $q$-measure. Therefore the sequence of measures $\nu'_n=(r_n)_*\mu'$, where the map $r_n: X\to V(n)$ is the same as in the proof of \thmref{q-measure}, is $q$-compatible. Moreover, we have the inequality $\nu'_n\le M\nu_n$. Hence there exists $h_n\in L^\infty (V(n),\nu_n)$ such that $\nu'_n=h_n\nu_n$. We also have $\|h_n\|_\infty\le M$. Since $\nu_{n-1}p_n=\nu_nq_n$, the condition $\nu'_{n-1}=s_*(\nu'_nq_n)$ gives $h_{n-1}=p_n(h_n\circ r)$. Thus $h=(h_n)$ is a positive bounded harmonic sequence of norm at least $\|f\|_\infty$. Conversely, let $h=(h_n)$ be a positive bounded harmonic sequence. We define $\nu'_n=h_n\nu_n$. From the relations $h_{n-1}=p_n(h_n\circ r)$ and $\nu_{n-1}=s_*(\nu_nq_n)$, we deduce that  $\nu'_{n-1}=s_*(\nu'_nq_n)$ and that $\nu'_n(1)=\nu'_{n-1}(1)$. We set $M=\sup_n\|h_n\|_\infty$. Since the sequence $(\nu'_n)$ is $q$-compatible, there exists a random walk (we no longer have probability measures but finite measures of the same mass) and a Markov measure $\mu'$ admitting $q$ as cotransition probability and $(\nu'_n)$ as one-dimensional distribution. The condition $\nu'_n\le M\nu_n$ gives $\mu'^n\le M\mu^n$ where $\mu^n$ and $\mu'^n$ are the corresponding measures on $X^n$. Therefore $\mu'\le M\mu$. There exists a unique positive element $f\in L^\infty(X,\mu)$ such that $\mu'=f\mu$ and it satisfies $\|f\|_\infty\le M$. The first part shows that $\|f\|_\infty=M$. Thus this correspondence gives an isomorphism between the positive cones of $L^\infty(X,\mu)^R$ and $H(p,\nu_0)$ which extends to an isomorphism of the ordered Banach spaces.

\end{proof}

\begin{rem} The proof given here relies only on the Radon-Nikodym theorem and on the disintegration theorem of probability measures. The classical proof of \cite[Proposition V-2-2]{nev:proba} uses the martingale convergence theorem. It gives explicit formulas relating an element $f$ of $L^\infty(X,\mu)^R$ and a bounded harmonic sequence $(h_n)$ in $H(p,\nu_0)$:
\begin{eqnarray}
f(e_1e_2\ldots)&=&\lim_n h_n(r(e_n))\quad{\rm a.e.}\\
h_n&=& P_{|n}(f_n)
\end{eqnarray}
where $f_n\in B(X_{|n})$ comes from the factorization $f=f_n\circ\pi_n\circ\ldots\pi_2\circ\pi_1$ and $\mu_{|n}=\nu_nP_{|n}$ is the disintegration of $\mu_{|n}$ along the source map  $s_n: X_{|n}\to V(n)$.
\end{rem}

\begin{defn} The point realization of the commutative von Neumann algebra $L^\infty(X,\mu)^R$ is called the {\it tail boundary} of the random walk. It is a standard Borel space $P$ equipped with a probability measure $m$. By definition, $L^\infty(X,\mu)^R=L^\infty(P,m)$ and $m$ is the restriction of the measure $\mu$.
\end{defn}

\begin{rem} This terminology is not a standard one. In \cite{cw:random}, it is called the Poisson boundary of the time dependent random walk. There is a dual construction of the tail boundary, given in \cite{gh:matrix-valued}, based on the inductive sequence of Banach spaces
\begin{displaymath}
(L)\qquad L^1(V(0),\nu_0)\xrightarrow{q_1} L^1(V(1),\nu_1)\xrightarrow{q_2}\ldots\xrightarrow{q_n} L^1(V(n),\nu_n)\xrightarrow{q_{n+1}}\ldots
\end{displaymath}
where $q_n(f)=q_n(f\circ s)$ for $f\in L^1(V(n-1),\nu_{n-1})$. Its inductive limit can be written $L^1(P,m)$ because it is an $L$-space. The adjoint of $q_n: L^1(V(n-1),\nu_{n-1})\to L^1(V(n),\nu_n)$ is the map $p_n: L^\infty(V(n),\nu_n)\to L^\infty(V(n-1),\nu_{n-1})$ defined earlier. Therefore, the projective limit $H(p,\nu_0)$ of the sequence $(M)$ is the dual of $L^1(P,m)$.
\end{rem}

\section{Matrix-valued random walks on groups}

\begin{defn} A {\it matrix-valued random walk} on a Borel group $G$ is given by the following data:
\begin{enumerate}
\item a random walk $(p,\nu_0)$ on a Borel Bratteli diagram $(V,E)$,
\item a Borel map $\rho: E\to G$.
\end{enumerate}
\end{defn}

Here is the construction of the Poisson boundary of a matrix-valued random walk. One first construct a new Bratteli diagram, called the skew-product of $(V, E, \rho)$. This is a particular case of a construction given for graphs or higher rank graphs in \cite{kp:graphs}.

\begin{defn} Let $\Gamma=(V,E)$ be a Borel Bratteli diagram and let $\rho$ be a Borel map from $E$ to a Borel group $G$. The  {\it skew-product}  $\Gamma(\rho)$ is the Bratteli diagram $(V\times G, E\times G)$ where $s(e,g)=(s(e),g)$ and $r(e,g)=(r(e),g\rho(e))$.
\end{defn}

The skew-product $\Gamma(\rho)$ carries a compatible Borel structure and an automorphic action of $G$, given by $h(v,g)=(v,hg)$ and $h(e,g)=(e,hg)$. The infinite path space of $\Gamma(\rho)$ can be identified with $X\times G$, where $X$ is the infinite path space of $\Gamma$: we associate to $(e_1e_2\ldots , g)$ the path $(e_1,g)(e_2,g\rho(e_1)),\ldots$. The map $\rho:E\to G$ defines a $G$-valued quasi-product cocycle  $c$ on the tail equivalence relation $R$ on $X$ of $(V,E)$ according to \defnref{quasi-product cocycle}. The tail equivalence relation of the skew-product Bratteli diagram $\Gamma(\rho)$ is the skew-product equivalence relation $R(c)$ on $X\times G$, as defined in \cite[Definition I.1.6]{ren:approach}. Explicitly
$$(e_1e_2\ldots , g)\sim (f_1f_2\ldots , h)\Leftrightarrow \exists N: {\rm for}\, n\ge N, e_n=f_n\quad g\rho(e_1)\ldots\rho(e_n)=h\rho(f_1)\ldots \rho(f_n)$$
Let $\mu$ be the Markov measure defined by the random walk $(p,\nu_0)$ on $(V,E)$. Then the Markov measure defined by the random walk $(\tilde p,\nu_0\times \lambda)$, where  $\tilde p_{(v,g)}=p_v\times \delta_g$ and $\lambda$ is a finite measure equivalent to the Haar measure of $G$, is  $\mu\times\lambda$. 
\begin{defn} Let $(V, E,\rho:E\to G,p,\nu_0)$ be a matrix-valued random walk on a locally compact group $G$. Its {\it Poisson boundary} is the point realization $(P,m)$ of 
$$L^\infty(X\times G,\mu\times\lambda)^{R(c)}\simeq H(\tilde p,\nu_0\times\lambda).$$
It is a measured $G$-space.
\end{defn}

\begin{ex} Time-dependent random walks on a group. This is the case when $(V,E)$ is a UHF diagram, i.e. there is only one vertex at each level. Let us assume  that $G$ is a discrete group and that $(p_n)$ is a sequence of probability measures on $G$ with finite support $G_n$. The UHF diagram $(V,E)$ is defined by $E(n)=G_n$. The map $\rho_n: E(n)\to G$ is the inclusion map. The infinite path space of $(V,E)$ is the product space $X=\prod G_n$ and its Markov measure is the product measure $\mu=\prod p_n$.  The one-dimensional distributions of the skew-product Bratteli diagram are the measures $\tilde\nu_n=\lambda*p_1*p_2\ldots*p_n$ on $G$. For a concrete example, let $G=\Z$ with probability measures $p_n=(1-t)\delta_0+t\delta_1$ for all $n$. Then $(V,E)$ is the UHF($2^\infty$) diagram. Choosing $\delta_0$ as initial measure rather than a finite measure equivalent to the counting measure on $\Z$, one gets the random walk of \exref{Pascal}. This is the kernel diagram rather than the skew product diagram described above. It can be checked directly that the bounded harmonic sequences are constant; in other words, the Poisson boundary of the time dependent random walk is trivial.
\end{ex}

\begin{ex} Flow of weights of hyperfinite von Neumann algebras.  Let $\varphi$ be a faithful normal state on a von Neumann algebra ${\mathcal M}$ satisfying the equivalent conditions of \thmref{Markov states}. Thus, there exists a Bratteli diagram $(V,E)$ and a random walk $(p,\nu_0)$ on it such that the pair $({\mathcal M},\varphi)$ is isomorphic to $(W^*(X,R,\mu), \mu\circ P)$, where $R$ is the tail equivalence relation on the infinite path space $X$ of the diagram and $\mu$ is the Markov measure of the random walk. Since the Radon-Nikodym $D_\mu$ is the quasi-product cocycle defined by the cotransition probability $q:E\to\R_+^*$ of the random walk, the flow of weights of $M$ is the Poisson boundary of the matrix-valued random walk on $R_+^*$ defined by $q$. 
\end{ex}

\vskip 5mm
{\it Acknowledgements.}  I thank T.~Giordano and A.~Vershik for fruitful discussions.

\vskip3mm

\end{document}